\def\ps@pprintTitle{%
 \let\@oddhead\@empty
 \let\@evenhead\@empty
 \def\@oddfoot{}%
 \let\@evenfoot\@oddfoot}
\newtheorem{thm}{Theorem}[section]
\newtheorem{lemma}{Lemma}[section]
\newcommand*\patchAmsMathEnvironmentForLineno[1]{%
\expandafter\let\csname old#1\expandafter\endcsname\csname #1\endcsname
\expandafter\let\csname oldend#1\expandafter\endcsname\csname end#1\endcsname
\renewenvironment{#1}%
{\linenomath\csname old#1\endcsname}%
{\csname oldend#1\endcsname\endlinenomath}}%
\newcommand*\patchBothAmsMathEnvironmentsForLineno[1]{%
\patchAmsMathEnvironmentForLineno{#1}%
\patchAmsMathEnvironmentForLineno{#1*}}%
\def\ps@pprintTitle{%
 \let\@oddhead\@empty
 \let\@evenhead\@empty
 \def\@oddfoot{}%
 \let\@evenfoot\@oddfoot}
\definecolor{newcolor}{rgb}{.8,.349,.1}
\begin{document}

\begin{frontmatter}

\title{Discretely Nonlinearly Stable Weight-Adjusted Flux Reconstruction High-Order Method for Compressible Flows on Curvilinear Grids}

\author[1]{Alexander {Cicchino}\corref{cor1}\fnref{CICC}}
\cortext[cor1]{Corresponding author. 
}
  \ead{alexander.cicchino@mail.mcgill.ca}
  \fntext[CICC]{Ph.D. Candidate}

\author[1]{Siva {Nadarajah}\fnref{Nadarajah}}
\fntext[Nadarajah]{Professor}
\ead{siva.nadarajah@mcgill.ca}

\address[1]{Department of Mechanical Engineering, McGill University, 
Montreal, QC, H3A 0C3, Canada}






\begin{abstract}
To achieve genuine predictive capability, an algorithm must consistently deliver accurate results over prolonged temporal integration periods, avoiding the unwarranted growth of aliasing errors that compromise the discrete solution. Provable nonlinear stability bounds the discrete approximation and ensures that the discretization does not diverge. Nonlinear stability is accomplished by satisfying a secondary conservation law, namely for compressible flows; the second law of thermodynamics. For high-order methods, discrete nonlinear stability and entropy stability, have been successfully implemented for discontinuous Galerkin (DG) and residual distribution schemes, where the stability proofs depend on properties of L2-norms. In this paper, nonlinearly stable flux reconstruction (NSFR) schemes are developed for three-dimensional compressible flow in curvilinear coordinates. NSFR is derived by merging the energy stable flux reconstruction (ESFR) framework with entropy stable DG schemes. NSFR is demonstrated to use larger time-steps than DG due to the ESFR correction functions, while preserving discrete nonlinear stability. NSFR differs from ESFR schemes in the literature since it incorporates the FR correction functions on the volume terms through the use of a modified mass matrix. We also prove that discrete kinetic energy stability cannot be preserved to machine precision for quadrature rules where the surface quadrature is not a subset of the volume quadrature. This result stems from the inverse mapping from the kinetic energy variables to the conservative variables not existing for the kinetic energy projected variables. This paper also presents the NSFR modified mass matrix in a weight-adjusted form. This form reduces the computational cost in curvilinear coordinates because the dense matrix inversion is approximated by a pre-computed projection operator and the inverse of a diagonal matrix on-the-fly and exploits the tensor product basis functions to utilize sum-factorization. The nonlinear stability properties of the scheme are verified on a nonsymmetric curvilinear grid for the inviscid Taylor-Green vortex problem and the correct orders of convergence were obtained on a curvilinear mesh for a manufactured solution. Lastly, we perform a computational cost comparison between conservative DG, overintegrated DG, and our proposed entropy conserving NSFR scheme, and find that our proposed entropy conserving NSFR scheme is computationally competitive with the conservative DG scheme.
\end{abstract}
\begin{keyword}{ High-Order, Discontinuous Galerkin, Flux Reconstruction, Entropy Stability, Summation-by-Parts, Compressible Euler
}
\end{keyword}
\end{frontmatter}

\section{Introduction}\label{sec: Introduction}

High-order methods such as discontinuous Galerkin (DG) and
flux reconstruction (FR), result in efficient computations via high solution accuracy and dense computational kernels, making them an attractive approach for the exascale concurrency on current and next generation hardware. Generally, high-order methods are known to be more efficient than low-order methods for linear hyperbolic time-dependent problems (e.g., see~\cite{kreiss1972comparison,swartz1974relative}). However, despite vigorous efforts by the research community, their application to real world complex problems governed by nonlinear partial differential equations (PDEs) has been limited due to a lack of robustness. 

The DG method, proposed by Reed and Hill~\cite{reed1973triangular}, combines both the key properties of finite volume and finite element schemes. As explained in the book of Hesthaven and Warburton~\cite{NDG}, the high-order scheme provides stability through a numerical flux function, and utilizes high-order shape functions to represent the solution. 
Another variation of the DG method is FR, initially proposed by H.T. Huynh~\cite{huynh_flux_2007} and later presented as a class of energy stable flux reconstruction (ESFR) schemes~\cite{vincent_new_2011,jameson_proof_2010,wang2009unifying}. Through the introduction of correction functions, which is equivalently viewed as a filtered DG correction field~\cite{zwanenburg_equivalence_2016,allaneau_connections_2011,Cicchino2020NewNorm}, ESFR recovers various high-order schemes alike the spectral difference~\cite{liu_spectral_2006} and spectral volume. 

There has been a concerted research effort to extend classical entropy stability arguments to high-order methods. The original work of Tadmor~\cite{tadmor1984skew} laid a foundation enabling high-order extensions, where Tadmor~\cite{tadmor1987numerical} proved that if the numerical flux satisfies the entropy condition from Harten~\cite{HartenSymmetric1983}, then the discretization is entropy stable---this was accomplished by introducing a weak condition on the numerical flux commonly referred to as the {\it Tadmor shuffle condition}. These notions were extended by LeFloch~\cite{lefloch2002fully,lefloch2000high} in the context of high-order finite difference stencils. In the last decade, these ideas were expanded to bounded domains by Fisher and co-authors~\cite{fisher2012high}, who combined the Summation-by-parts (SBP) framework with Tadmor's two-point flux functions to achieve entropy conservation. Extending the connection made by Fisher {\it et al}.~\cite{fisher2012high} between the SBP framework and Tadmor's shuffle condition, entropy conservative and stable methods have been successfully implemented in  DG~\cite{gassner2013skew,chan2018discretely,gassner2016split,chan2019skew}, FR~\cite{ranocha2016summation,abe2018stable} for only the DG case, unstructured methods~\cite{fernandez2014review,crean2018entropy,Crean2019Staggered}, extended high-order SBP forms~\cite{fisher2012high,fisher2013high,crean2018entropy,FriedrichEntropy2020,parsani2015entropyWall}, and residual distribution schemes~\cite{abgrall2018connection,abgrall2018general,abgrall2019reinterpretation,abgrall2021analysis}.

Recent developments by Cicchino~\textit{et al}.~\cite{CicchinoNonlinearlyStableFluxReconstruction2021,cicchino2022provably} proved that nonlinear stability for FR schemes can only be satisfied if the FR correction functions are applied to the nonlinear volume terms. This paper derives a general nonlinearly stable flux reconstruction (NSFR) framework for three-dimensional vector-valued problems in curvilinear coordinates. Specifically, we consider Euler's equations in this work. The first main contribution follows the works by Chan~\cite{chan2018discretely,chan2019discretely,chan2019skew} and Cicchino~\textit{et al}.~\cite{CicchinoNonlinearlyStableFluxReconstruction2021,cicchino2022provably} to derive NSFR that is free-stream preserving, globally conservative, and entropy conserving for general modal basis functions evaluated on quadrature rules with at least $2p-1$ strength, for any ESFR correction function. This is accomplished by formulating NSFR with the general modal skew-symmetric operators from Chan~\cite{chan2019skew}, then incorporating the ESFR correction functions on both the volume and surface hybrids terms through the modified mass matrix alike Cicchino and coauthors~\cite{CicchinoNonlinearlyStableFluxReconstruction2021,cicchino2022provably}. Apart from entropy conservation, kinetic energy conservation is also sought. Unfortunately, the inverse mapping from the kinetic energy variables to the conservative variables does not exist. This consequence leads to our proof that discrete kinetic energy stability cannot be preserved to machine precision for quadrature rules where the surface quadrature is not a subset of the volume quadrature.

We first introduce the notation used in this paper in Sec.~\ref{sec: Math notation}, where Sec.~\ref{sec: entropy notation} presents entropy conservation and Sec.~\ref{sec: comp notation} presents notation pertaining to computational quantities. Then, in Sec.~\ref{sec: DG} we derive the DG scheme, followed by ESFR in Sec.~\ref{sec: FR}, and lastly, we merge the concepts in Sec.~\ref{sec: NSFR}. In Sec.~\ref{sec: conserved prop} we prove that NSFR is free-stream preserving, globally conservative, and entropy conserving. In Sec.~\ref{sec: nonlinear stability} we prove that kinetic energy cannot be discretely conserved if the surface quadrature is not a subset of the volume quadrature. In Sec.~\ref{sec: TGV} we numerically verify all conserved quantities to machine precision for different FR correction functions on a non-symmetrically warped curvilinear grid. Lastly, in Sec.~\ref{sec: manfac sol}, we verify the orders of convergence for a manufactured solution on a three-dimensional, nonsymmetrically warped curvilinear grid.

As shown in Cicchino~\textit{et al}.~\cite{cicchino2022provably}, for curvilinear coordinates, the scheme requires that a dense matrix is inverted for every element. Motivated by the work by Chan and Wilcox~\cite{chan2019discretely,chan2017weight}, we present NSFR in a low-storage, weight-adjusted inverse form. This is achieved in Sec.~\ref{sec: weight adjusted} by introducing an ESFR projection operator, that projects onto the broken Sobolev-space, and deriving the weight-adjusted mass inverse through it. 
Then, we demonstrate that the operation can be further improved through sum-factorization techniques~\cite{orszag1979spectral} by introducing an FR projection operator. In Sec.~\ref{sec: manfac sol} we numerically verify that NSFR with a weight-adjusted mass inverse preserves the orders of convergence in curvilinear coordinates for a manufactured solution. 
Also, in Sec.~\ref{sec: TGV}, we numerically demonstrate that the weight-adjusted inverse preserves the stability properties for the weight-adjusted system. In Sec.~\ref{sec: comparison}, we compare a conservative nodal DG scheme, and overintegrated nodal DG scheme, and our proposed algorithm. We find that our proposed algorithm is computationally competitive with the nodal DG scheme, as compared to the overintegrated scheme. This result is dependent on the weight-adjusted mass matrix inversion, sum-factorization, and the novel sum-factorized Hadamard product evaluation from Cicchino and Nadarajah~\cite{CicchinoHadamardScaling2023}.

\section{Preliminaries}\label{sec: Math notation}

In this section, we present the notation that is used throughout the paper. First, we review concepts on entropy conservation. Then, we introduce notation pertaining to the computational setup.

\subsection{Systems of Equations}\label{sec: entropy notation}

Consider the scalar 3D conservation law,
\begin{equation}
\begin{aligned}
   & \frac{\partial}{\partial t}{u}_i\left(\bm{x}^c,t\right) +\nabla\cdot\bm{f}_i\left(\bm{u}\left(\bm{x}^c,t\right) \right)=0,\:\forall i\in[1,n_\text{state}], \: t\geq 0,\:\bm{x}^c\coloneqq[x \text{ }y\text{ }z]\in\Omega,\\
 &  {u}_i(\bm{x}^c,0)={u}_{i,0}\left(\bm{x}^c\right),
\end{aligned}\label{eq: gov eqn start}
\end{equation}

\noindent where $\bm{f}_i\left(\bm{u}\left(\bm{x}^c,t\right) \right)\in\mathbb{R}^{1\times d}$ stores the fluxes in each of the $d$ directions for the $i$-th equation of state, $n_\text{state}$ represents the number of state variables, and the superscript $c$ refers to Cartesian coordinates. In this paper row vector notation will be used.


\noindent {\it Smooth} solutions of Eq.~(\ref{eq: gov eqn start}) satisfy the entropy equality,

\begin{equation}
    \frac{\partial U}{\partial t} + \nabla \cdot \bm{F} = 0,\:\:\bm{F}=\bm{F}(u),\label{eq: entropy equality}
\end{equation}

\noindent where $U$ is a convex function of $u$, while {\it weak} solutions satisfy the entropy inequality,

\begin{equation}
    \frac{\partial U}{\partial t} + \nabla \cdot \bm{F} \leq 0.\label{eq: entropy inequality}
\end{equation}


In the context of entropy conservative numerical schemes, from Mock~\cite{mock1980systems} and Harten~\cite{HartenSymmetric1983}, it was shown that the symmetrization of the PDE by the entropy variables $\bm{v}=U^\prime(\bm{u}),\:\bm{v}\in\mathbb{R}^{1\times n_\text{state}}$, along with the convexity of the entropy function $U$, leads to the existence of an entropy flux function $\bm{F}(\bm{u})$ such that in each physical direction $k\in[1,d]$,

\begin{equation}
    \bm{v}\frac{\partial \bm{f}^k}{\partial \bm{u}}^T = \frac{d F^k(\bm{u})}{d \bm{u}},\quad \mbox{where,}\: \left(\frac{\partial \bm{f}^k}{\partial \bm{u}}^T\right)_{ij}=\frac{\partial f^k_i(\bm{u})}{\partial u_j},\:\forall i,j\in[1,n_\text{state}],\label{eq: entropy flux }  
\end{equation}

\noindent Integrating Eq.~(\ref{eq: entropy flux }) with respect to the conservative variables, and introducing the entropy potential $\bm{\psi}(\bm{v})$ such that $\frac{d\psi^k(\bm{v})}{d\bm{v}} = \bm{f}^k(\bm{u}(\bm{v}))$, we have~\cite[Eq. 1.13b]{HartenSymmetric1983},

\begin{equation}
    \psi^k(\bm{v}) = \bm{v}{\bm{f}^k}^T - F^k(\bm{u}),\:\forall k\in[1,d].\label{eq: entropy pot with flux relationship}
\end{equation}

\noindent Tadmor~\cite[Eq. 4.5a]{tadmor1987numerical} demonstrated that a numerical scheme is entropy conservative if,

\begin{equation}
    \bm{v} \Delta \left({\bm{f}^k}^T\right) = \Delta \left(F^k\right).\label{eq: entorpy cons flux cond}
\end{equation}

\noindent Unfortunately, Eq.~(\ref{eq: entorpy cons flux cond}) is a strong condition on the flux. To satisfy Eq.~(\ref{eq: entorpy cons flux cond}) in a weak sense, Tadmor~\cite{tadmor1987numerical} applied the $\Delta$ operator on Eq.~(\ref{eq: entropy pot with flux relationship}) to retrieve $\Delta \left(\psi^k\right) = \Delta \left(\bm{v}\right) {\bm{f}^k}^T  + \bm{v}\Delta \left({\bm{f}^k}^T\right) - \Delta \left(F^k\right)$, resulting in the equivalent condition~\cite[Eq. 4.5b]{tadmor1987numerical},

\begin{equation}
    \Delta \left(\bm{v}\right){\bm{f}^k}^T = \Delta \left(\psi^k\right).\label{eq: tadmor shuffle}
\end{equation}

Eq.~(\ref{eq: tadmor shuffle}) is known as the {\it Tadmor shuffle condition}. We introduce $\llbracket \bm{v} \rrbracket=\bm{v}_i - \bm{v}_j$ as the jump, and it has been shown that the two-point flux $\bm{f}^k_s(\bm{v}_i,\bm{v}_j)$,

\begin{equation}
\llbracket \bm{v} \rrbracket
   \bm{f}^k_s(\bm{v}_i,\bm{v}_j)^T = 
   \llbracket \psi^k \rrbracket
   ,\: \forall k\in[1,d]\label{eq: EC flux}
\end{equation}

\noindent is entropy conserving~\cite{tadmor1987numerical,chan2018discretely} in the sense that the numerical discretization conserves the equality\\ $\int_{\bm{\Omega}}\frac{\partial U}{\partial t}d\bm{\Omega} + \int_{\bm{\Gamma}}\left(\bm{v} \bm{f}^T - \bm{\psi}\right)\cdot \hat{\bm{n}} \:d\bm{\Gamma} = 0$ exactly.

In this paper, we consider the three-dimensional unsteady Euler equations,

\begin{equation}
\begin{split}
 &   \frac{\partial \bm{W}^T}{\partial t} + \nabla \cdot \bm{f}\left(\bm{W}\right)^T = \bm{0}^T,\\
 & \bm{W} = \begin{bmatrix}
 \rho,& \rho u,& \rho v ,& \rho w ,& \rho e
 \end{bmatrix},\\
& \bm{f}_1 = \begin{bmatrix}
     \rho u,& \rho u^2 + p,& \rho u v, & \rho u w ,& \left(\rho e + p\right)u
 \end{bmatrix},\\
 &\bm{f}_2 = \begin{bmatrix}
     \rho v,& \rho uv,& \rho  v^2  + p, & \rho v w ,& \left(\rho e + p\right)v
 \end{bmatrix},\\
 &\bm{f}_3 = \begin{bmatrix}
     \rho w,& \rho uw,& \rho vw , & \rho  w^2 + p ,& \left(\rho e + p\right)w
 \end{bmatrix},
\end{split}
\end{equation}

\noindent where $\rho e = \frac{p}{\gamma -1} + \frac{1}{2}\rho\left(u^2+v^2+w^2\right)$, and $\rho,\: u,\: v,\:w,\:p\:e\:\gamma $ are the density, velocity, pressure, specific total energy, and adiabatic coefficient respectively.

\subsection{Computational Space}\label{sec: comp notation}

To discretely solve Eq.~(\ref{eq: gov eqn start}), we need to introduce notations with respect to the computational space and basis functions. Since we discretely represent each equation of state separately, we will remove the boldface on $u$ to indicate that it is for a single state variable.

The computational domain $\bm{\Omega}^h$ is partitioned into $M$ non-overlapping elements, $\bm{\Omega}_m$, where the domain is represented by the union of the elements, \textit{i.e.}
\begin{equation}
    \bm{\Omega} \simeq \bm{\Omega}^h \coloneqq \bigcup_{m=1}^M \bm{\Omega}_m.
    \nonumber
\end{equation}
Each element $m$ has a surface denoted by $\bm{\Gamma}_m$. The global approximation, $u^h(\bm{x}^c,t)$, is constructed from the direct sum of each {\color{black}{local approximation, $u_m^h(\bm{x}^c,t)$}}, \textit{i.e.} 
\begin{equation}
    u(\bm{x}^c,t) \simeq u^h(\bm{x}^c,t)=\bigoplus_{m=1}^M u_m^h(\bm{x}^c,t).
    \nonumber
\end{equation}
Throughout this paper, all quantities with a subscript $m$ are specifically unique to the element $m$.
{\color{black}On each element, we represent the solution with $N_p$ linearly independent modal or nodal basis functions of a maximum order of $p$; where, $N_p\coloneqq(p+1)^d$. 
The solution representation is},
$
     u_m^h(\bm{x}^c,t) \coloneqq  \sum_{i=1}^{N_p} {\chi}_{m,i}(\bm{x}^c)\hat{u}_{m,i}(t)
    \nonumber
$, where $\hat{u}_{m,i}(t)$ are the modal coefficients for the solution. 
The elementwise residual {for the governing equation~(\ref{eq: gov eqn start})} is,
\begin{equation}
     R_m^h(\bm{x}^c,t)=\frac{\partial}{\partial t}u_m^h(\bm{x}^c,t) + \nabla \cdot \bm{f}(u_m^h(\bm{x}^c,t)).\label{eq: residual}
\end{equation}

\noindent The basis functions in each element are defined as,
\begin{equation}
     \bm{\chi}(\bm{x}^c) \coloneqq [\chi_{1}(\bm{x}^c)\text{, }\chi_{2}(\bm{x}^c)\text{, } \dots\text{, } \chi_{N_p}(\bm{x}^c)] = \bm{\chi}({x})\otimes \bm{\chi}({y}) \otimes \bm{\chi}({z}) \in \mathbb{R}^{1\times N_p},
\end{equation}

\noindent where $\otimes$ is the tensor product. Importantly, in curvilinear elements, the basis functions are not polynomial in physical space, but the basis functions are polynomial in reference space. This concept was explored in great detail in Cicchino~{\it et al}.~\cite[Sec. 3]{cicchino2022provably}, as well as proofs for rate of convergence in Botti~\cite{botti2012influence} and Moxey~\textit{et al}.~\cite{moxey2019interpolation}.

The physical coordinates are mapped to the reference element $ \bm{\xi}^r \coloneqq \{ [\xi \text{, } \eta \text{, }\zeta]:-1\leq \xi,\eta,\zeta\leq1 \}$ by

\begin{equation}
      \bm{x}_m^c( \bm{\xi}^r)\coloneqq  \bm{\Theta}_m( \bm{\xi}^r)=
     \sum_{i=1}^{N_{t,m}} {\Theta}_{m,i}( \bm{\xi}^r)\hat{ \bm{x}}_{m,i}^c,
\end{equation}
where $ {\Theta}_{m,i}$ 
are the mapping shape functions of the $N_{t,m}$ physical mapping control points $\hat{\bm{x}}_{m ,i}^c $.

To transform Eq.~(\ref{eq: residual}) to the reference basis, {\color{black}as in refs}~\cite{gassner2018br1,manzanero2019entropy,kopriva2006metric,thomas1979geometric,vinokur2002extension}, we introduce the physical 

\begin{equation}
    \bm{a}_j \coloneqq \frac{\partial \bm{x}^c}{\partial \xi^j},\text{ } j=1,2,3
    \nonumber
\end{equation}

\noindent and reference 

\begin{equation}
    \bm{a}^j \coloneqq \nabla\xi^j,\text{ } j=1,2,3
    \nonumber
\end{equation}

\noindent vector bases. We then introduce the determinant of the metric Jacobian as

\begin{equation}
  J^\Omega \coloneqq  |\bm{J}^\Omega| =  \bm{a}_1\cdot (\bm{a}_2\times \bm{a}_3),
\end{equation}

\noindent and the metric Jacobian cofactor matrix as ~\cite{gassner2018br1,zwanenburg_equivalence_2016,manzanero2019entropy,kopriva2019free},

\begin{equation}
    \bm{C}^T \coloneqq J^\Omega(\bm{J}^\Omega)^{-1}=\begin{bmatrix}J^\Omega\bm{a}^1\\
    J^\Omega\bm{a}^2\\
    J^\Omega\bm{a}^3 \end{bmatrix}
    = \begin{bmatrix}
    J^\Omega\bm{a}^\xi\\
    J^\Omega\bm{a}^\eta\\
    J^\Omega\bm{a}^\zeta
    \end{bmatrix}.
\end{equation}

The metric cofactor matrix is formulated by the \enquote{conservative curl} form from~\cite[Eq. 36]{kopriva2006metric} {\color{black}so }as to discretely satisfy the Geometric Conservation Law (GCL) 

\begin{equation}
   \sum_{i=1}^{3}\frac{\partial (J^\Omega (\bm{a}^i)_n)}{\partial \xi^i}=0\text{, }n=1,2,3 \Leftrightarrow \sum_{i=1}^{3}\frac{\partial}{\partial \xi^i}
    (\bm{C})_{ni}=0\text{, }n=1,2,3
    \Leftrightarrow \nabla^r\cdot(\bm{C})=\bm{0},
    \label{eq: GCL}
\end{equation}

\noindent {\color{black}for a fixed mesh}, where $(\; )_{ni}$ represents the $n^{\text{th}}$ row, $i^{\text{th}}$ column component of a matrix. The exact implementation of the metric cofactor matrix is extensively detailed in Cicchino~{\it et al}.~\cite[Sec. 5]{cicchino2022provably} 

{\color{black}Having established the transformations mapping the physical to the reference coordinates on each element, the differential volume and surface elements can be defined as,}

\begin{equation}
    d\bm{\Omega}_m = J_m^\Omega d\bm{\Omega}_r\text{, similarly }d\bm{\Gamma}_m = J_m^\Gamma d\bm{\Gamma}_r.\label{eq: diff element def}
\end{equation}

\noindent The reference flux for each element $m$ is defined as

\begin{equation}
    \bm{f}_m^r = \bm{C}_m^T\cdot \bm{f}_m 
   \Leftrightarrow \bm{f}_{m,j}^r =\sum_{i=1}^d (\bm{C}_{m}^T)_{ji}\bm{f}_{m,i} \Leftrightarrow \bm{f}_m^r
   =\bm{f}_{m}\bm{C}_m,
   \label{eq: contravariant flux def}
\end{equation}

\noindent where the dot product notation for tensor-vector operations is introduced. The relationship between the physical and reference unit normals is given as~\cite[Appendix B.2]{zwanenburg_equivalence_2016},

\begin{equation}
    \hat{\bm{n}}_m = \frac{1}{J_m^\Gamma}\bm{C}_m\cdot \hat{\bm{n}}^r 
    =\frac{1}{J_m^\Gamma} \hat{\bm{n}}^r \bm{C}_m^T,\label{eq: normals def}
\end{equation}

\noindent for a water-tight mesh. 
 Additionally, the definition of the divergence operator derived from divergence theorem in curvilinear coordinates can be expressed as~\cite[Eq. (2.22) and (2.26)]{gassner2018br1},

\begin{equation}
    \nabla\cdot \bm{f}_m 
    =\frac{1}{J_m^\Omega} \nabla^r \cdot\Big( \bm{f}_m\bm{C}_m \Big)= \frac{1}{J_m^\Omega} \nabla^r \cdot \bm{f}_m^r,\label{eq: divergence def}
\end{equation}

\noindent and the gradient of a scalar as~\cite[Eq. (2.21)]{gassner2018br1},

\begin{equation}
    \nabla {\chi} = \frac{1}{J_m^\Omega} \bm{C}_m \cdot \nabla^r \chi = \frac{1}{J_m^\Omega} \Big(\nabla^r \chi \Big) \bm{C}_m^T.\label{eq: gradient def}
\end{equation}


Thus, substituting Eq.~(\ref{eq: divergence def}) into Eq.~(\ref{eq: residual}), the reference elementwise residual can be expressed as,

\begin{equation}
    R_m^{h,r}(\bm{\xi}^r,t)\coloneqq R_m^{h}(\bm{\Theta}_m(\bm{\xi}^r),t)= \frac{\partial}{\partial t}u_m^h(\bm{\Theta}_m(\bm{\xi}^r),t) + \frac{1}{J_m^\Omega}\nabla^r \cdot \bm{f}^r(u_m^h(\bm{\Theta}_m(\bm{\xi}^r),t)).\label{eq: residual reference}
\end{equation}

Lastly, since the basis functions are polynomial in the reference space, we introduce $\bm{\chi}\left(\bm{\xi}^r\right)$, and they discretely satisfy discrete integration-by-parts for quadrature rules exact for at least $2p-1$ polynomials,

\begin{equation}
    \int_{\bm{\Omega}_r} \nabla^r{\chi}_i\left(\bm{\xi}^r\right) 
    {\chi}_j\left(\bm{\xi}^r\right) d\bm{\Omega}_r
    + \int_{\bm{\Omega}_r} {\chi}_i\left(\bm{\xi}^r\right) 
    \nabla^r{\chi}_j\left(\bm{\xi}^r\right) d\bm{\Omega}_r
    = \int_{\bm{\Gamma}_r} {\chi}_i\left(\bm{\xi}^r\right){\chi}_j\left(\bm{\xi}^r\right) \hat{\bm{n}}^r d\bm{\Gamma}_r,\:\forall i,j\in[1,N_p].\label{eq: SBP}
\end{equation}

\noindent Eq.~(\ref{eq: SBP}) is commonly referred to as the summation-by-parts (SBP) property.
\section{Nonlinearly Stable Flux Reconstruction}

In this section, we will present our nonlinearly stable FR scheme. To arrive at it, we first present DG, then ESFR. Following the motivation from~\cite{CicchinoNonlinearlyStableFluxReconstruction2021,cicchino2022provably}, we then incorporate the entropy stable framework within the stiffness operator to arrive at NSFR.

\subsection{Discontinuous Galerkin} \label{sec: DG}

The discontinuous Galerkin approach is obtained by multiplying the reference residual Eq.~(\ref{eq: residual reference}) by a test function which is chosen to be the basis function and integrating in physical space. Then, we apply integration-by-parts in the reference space on the divergence of the flux. Since the solution and flux are both discontinuous across the face, we introduce a numerical surface flux $\bm{f}^*_m$ to treat the resulting Riemann problem. The continuous weak DG is,

\begin{equation}
\begin{split}
    \int_{\bm{\Omega}_r} {\chi}_i(\bm{\xi}^r) J_m^\Omega\bm{\chi}(\bm{\xi}^r) \frac{d}{d t} \hat{\bm{u}}_m(t)^T d \bm{\Omega}_r
    -\int_{\bm{\Omega}_r} \nabla^r{\chi}_i(\bm{\xi}^r) 
    \cdot {{\bm{f}}_m^r} d\bm{\Omega}_r 
    +\int_{\bm{\Gamma}_r} {\chi}_i(\bm{\xi}^r) 
    \hat{\bm{n}}^r\bm{C}_m^T \cdot \bm{f}_m^{*}(u_m^h(\bm{\Theta}_m(\bm{\xi}^r),t))d \bm{\Gamma}_r = {0},\\ \:\forall i=1,\dots,N_p.
\end{split}\label{eq: variational Weak DG}
\end{equation}

The nonlinear reference flux $\bm{f}_m^r$ is computed through Eq.~(\ref{eq: contravariant flux def}), where the physical flux $\bm{f}_m = f(\bm{u}(\bm{\xi}_v^r,t)$ is evaluated directly from the solution at the integration points $\bm{\xi}_v^r$. Discretely, to evaluate the integrals in Eq.~(\ref{eq: variational Weak DG}), we utilize quadrature rules, where $\bm{\xi}_v^r$ represents the volume quadrature nodes, and $\bm{\xi}_{f,k}^r$ represents the surface quadrature node on the face $f\in[1,N_f]$ with facet cubature node $k\in[1,N_{fp}]$. To arrive at the equivalent strong form, we first need to pay special attention to the nonlinear flux. Since the reference flux is evaluated directly on the volume cubature nodes,  we need to project it onto a polynomial basis with the order of the number of quadrature nodes. For example, if $\bm{u}^h$ is order $p$, and it is integrated on $p+4$ quadrature nodes, the flux basis--$\bm{\phi}(\bm{\xi}^r)$ needs to be of at least order $p+3$ to avoid aliasing errors when representing the flux. The easiest choice of basis functions for the flux basis is to use Lagrange polynomials collocated on the quadrature nodes since a collocated Lagrange basis has an identity projection operator. By collocated, we mean that the Lagrange polynomial is evaluated on the same nodes, the quadrature nodes, as the polynomial it is constructed from, making $l_{ij}=\delta_{ij}$, where $i$ is the polynomial basis number and $j$ the nodal number.

Thus, applying discrete integration by parts on Eq.~(\ref{eq: variational Weak DG}) results in,

\begin{equation}
    \bm{M}_m \frac{d}{d t} \hat{\bm{u}}_m(t)^T
    + \bm{\chi}(\bm{\xi}_v^r)^T \bm{W} \nabla^r\bm{\phi}(\bm{\xi}_v^r) \cdot \hat{\bm{f}}_m^r(t)^T
    + \sum_{f=1}^{N_f}\sum_{k=1}^{N_{fp}}
    \bm{\chi}(\bm{\xi}_{fk}^r)^T W_{fk} \hat{\bm{n}}^r \cdot
    \left[\bm{f}_m^{*,r} -\bm{\phi}(\bm{\xi}_{fk}^r)\hat{\bm{f}}_m^r(t)^T  \right]=\bm{0}^T,\label{eq: cons DG strong}
\end{equation}

\noindent where $\bm{W}$ is a diagonal matrix storing the quadrature weights and the discrete mass matrix is,

\begin{equation}
    \begin{aligned}
      (\bm{M}_m)_{ij}\approx\int_{\Omega_r}J_m^\Omega {\chi}_i(\bm{\xi}^r){\chi}_j(\bm{\xi}^r)d\Omega_r \to \bm{M}_m= \bm{\chi}(\bm{\xi}_v^r)^T\bm{W}\bm{J}_m\bm{\chi}(\bm{\xi}_v^r),
    \end{aligned}
\end{equation}

\noindent with $\bm{J}_m$ as a diagonal matrix storing the determinant of the metric Jacobian at quadrature nodes.

\subsection{Energy Stable Flux Reconstruction}\label{sec: FR}

As initially proposed by H.T. Huynh~\cite{huynh_flux_2007}, in an ESFR framework, the reference flux is composed of a discontinuous and a corrected component,
\begin{equation}
    \bm{f}^r(u_m^h(\bm{\Theta}_m(\bm{\xi}^r),t))\coloneqq 
    \bm{f}^{D,r}(u_m^h(\bm{\Theta}_m(\bm{\xi}^r),t)) + \sum_{f=1}^{N_f}\sum_{k=1}^{N_{f_{p}}} \bm{g}^{f,k}(\bm{\xi}^r)[\hat{\bm{n}}^r\cdot ({\bm{f}}_m^{*,r}-{\bm{f}}_m^r)].\label{eq: ESFR corr flux}
\end{equation}

\noindent For three-dimensions, the vector correction functions $\bm{g}^{f,k}(\bm{\xi}^r)\in\mathbb{R}^{1\times d}$ associated with face $f$, facet cubature node $k$ in the reference element, are defined as the tensor product of the $p+1$ order one-dimensional correction functions ($\bm{\chi}_{p+1}$ stores a basis of order $p+1$), with the corresponding $p$-th order basis functions in the other reference directions.
\begin{equation}
\begin{split}
    \bm{g}^{f,k}(\bm{\xi}^r) = [\Big(\bm{\chi}_{p+1}(\xi)\otimes \bm{\chi}({\eta}) \otimes \bm{\chi}({\zeta})\Big)\Big(\hat{\bm{g}}^{f,k}_1\Big)^T , \Big(\bm{\chi}({\xi})\otimes \bm{\chi}_{p+1}(\eta)\otimes \bm{\chi}({\zeta})\Big)\Big(\hat{\bm{g}}^{f,k}_2\Big)^T,\Big(\bm{\chi}({\xi}) \otimes \bm{\chi}({\eta}) \otimes \bm{\chi}_{p+1}(\zeta)\Big) \Big(\hat{\bm{g}}^{f,k}_3\Big)^T ] \\
    = [{g}^{f,k}_1(\bm{\xi}^r), {g}^{f,k}_2(\bm{\xi}^r), {g}^{f,k}_3(\bm{\xi}^r)],
    \end{split}
\end{equation}

\noindent such that 
\begin{equation}
    \bm{g}^{f,k}(\bm{\xi}_{f_i, k_j}^r) \cdot \hat{\bm{n}}^r_{f_i,k_j} = \begin{cases}
    1, \;\;\text{ if } f_i = f,\text{ and } k_j=k\\
    0, \;\;\text{ otherwise.}
    \end{cases}\label{eq: ESFR conditions}
\end{equation}

Coupled with the symmetry condition $g^L(\xi^r)=-g^R(-\xi^r)$ to satisfy Eq.~(\ref{eq: ESFR conditions}), the one-dimensional ESFR fundamental assumption from~\cite{vincent_new_2011} is, 

\begin{equation}
    \int_{-1}^{1} \nabla^r {\chi}_i({\xi}^r)
      {g}^{f,k}({\xi}^r) d\xi  
    - c\frac{\partial^{p} {\chi}_i({\xi}^r) ^T}{\partial \xi^p}
   \frac{\partial^{p+1} {g}^{f,k}({\xi}^r)}{\partial \xi^{p+1}} = {0},\: \forall i=1,\dots,N_p,\label{eq: 1D ESFR fund assumpt}
\end{equation}

\noindent and similarly for the other reference directions.

%
%

Akin to~\cite{castonguay2012newTRI,williams_energy_2014,cicchino2022provably}, consider introducing the differential operator,

\begin{equation}
\begin{split}
&\text{2D:} \indent \partial^{(s,v)}=\frac{\partial^{s+v}}{\partial \xi^s\partial \eta^v},\: \text{such that } s=\{ 0,p\},\: v=\{ 0,p\},\: s+v\geq p ,\\
&\text{3D:}\indent     \partial^{(s,v,w)}=\frac{\partial^{s+v+w}}{\partial \xi^s\partial \eta^v\partial \zeta^w},\: \text{such that } s=\{ 0,p\},\: v=\{ 0,p\},\: w=\{ 0,p\},\: s+v+w\geq p,\label{eq: partial deriv ESFR def}
\end{split}
\end{equation}

\noindent with its corresponding correction parameter

\begin{equation}
\begin{split}
&\text{2D:} \indent c_{(s,v)}=c_{1D}^{(\frac{s}{p}+\frac{v}{p})},\\
&\text{3D:}\indent     c_{(s,v,w)}=c_{1D}^{(\frac{s}{p}+\frac{v}{p}+\frac{w}{p})}.
    \end{split}
\end{equation}

\noindent {\color{black} Note that the total degree is $d\times p$ for a tensor product basis that is of order $p$ in each direction.}

\noindent For example,

\begin{equation}
    \partial^{(0,p,0)} = \frac{\partial^p}{\partial \eta^p},\: 
     c_{(0,p,0)} = c_{1D},\:
    \partial^{(p,0,p)}= \frac{\partial^{2p}}{\partial \xi^p \partial \zeta^p},\: c_{(p,0,p)} = c_{1D}^2,\:
    \partial^{(p,p,p)}= \frac{\partial^{3p}}{\partial \xi^p \partial \eta^p \partial \zeta^p},\: c_{(p,p,p)} = c_{1D}^3.
    \nonumber
\end{equation}

\noindent Since $\int_{\bm{\Omega}_r}\partial^{(s,v,w)}\bm{\chi}(\bm{\xi}^r)^T\partial^{(s,v,w)}\Big(\nabla^r\bm{\chi}(\bm{\xi}^r)\Big) d\bm{\Omega}_r$ composes of the complete broken Sobolev-norm for each $s$, $v$, $w$~\cite{sheshadri2016stability,cicchino2022provably}, the tensor product ESFR fundamental assumption, that recovers the VCJH~\cite{vincent_insights_2011} schemes exactly for linear elements is defined as,

\begin{equation}
        \int_{\bm{\Omega}_r} \nabla^r {\chi}_i(\bm{\xi}^r) \cdot
     \bm{g}^{f,k}(\bm{\xi}^r) d\bm{\Omega}_r  
    - \sum_{s,v,w}c_{(s,v,w)}\partial^{(s,v,w)} {\chi}_i(\bm{\xi}^r) 
   \partial^{(s,v,w)} \Big( \nabla^r \cdot \bm{g}^{f,k}(\bm{\xi}^r)\Big) = {0},\: \forall i=1,\dots, N_p,\label{eq: ESFR fund assumpt}
\end{equation}

\noindent where $\sum_{s,v,w}$ sums over all possible $s$, $v$, $w$ combinations in Eq.~(\ref{eq: partial deriv ESFR def}).

To discretely represent the divergence of the correction functions, we introduce the correction field \newline ${h}^{f,k}(\bm{\xi}^r)\in P_{3p}(\bm{\Omega}_r)$ associated with the face $f$ cubature node $k$ as,

\begin{equation}
    {h}^{f,k}(\bm{\xi}^r) = \bm{\chi}(\bm{\xi}^r)\Big(\hat{\bm{h}}^{f,k}\Big)^T
    =\nabla^r \cdot \bm{g}^{f,k}(\bm{\xi}^r).
\end{equation}
To arrive at the ESFR strong form, we substitute the ESFR reference flux, Eq.~(\ref{eq: ESFR corr flux}), into the elementwise reference residual, Eq.~(\ref{eq: residual reference}), project it onto the polynomial basis, and evaluate at cubature nodes,

\begin{equation}
    \bm{\chi}(\bm{\xi}_v^r)\frac{d}{dt}\hat{\bm{u}}_m(t)^T 
    +\bm{J}_m^{-1}\nabla^r\bm{\chi}(\bm{\xi}_v^r) \cdot \hat{\bm{f}}_{m}^{D,r}(t)^T
    +\bm{J}_m^{-1}\sum_{f=1}^{N_f}\sum_{k=1}^{N_{fp}} \bm{\chi}(\bm{\xi}_v^r)\Big(\hat{\bm{h}}^{f,k}\Big)^T [\hat{\bm{n}}^r\cdot ({\bm{f}}_m^{*,r}-{\bm{f}}_m^r)]   =\bm{0}^T.
    \label{eq:ESFR strong}
\end{equation}

%
%

For implementation in a pre-existing DG framework, Allaneau and Jameson~\cite{allaneau_connections_2011} showed that ESFR can be expressed as a filtered DG scheme in one-dimension. Zwanenburg and Nadarajah~\cite{zwanenburg_equivalence_2016} proved that ESFR can be expressed as a filtered DG scheme for general three-dimensional curvilinear coordinates on mixed element types. Expanding the reference ESFR filter in Zwanenburg and Nadarajah~\cite{zwanenburg_equivalence_2016} allows ESFR schemes to be seen as a DG-type scheme with a modified norm~\cite{Cicchino2020NewNorm}. Only by viewing ESFR as DG with a modified norm can a nonlinearly stable form be achieved~\cite{CicchinoNonlinearlyStableFluxReconstruction2021,cicchino2022provably} since the determinant of the metric Jacobian gets embedded in the norm, and the entropy stability application is embedded in the stiffness operator. The equivalence to view ESFR through a modified norm is dependent on~\cite[Lemma 2]{cicchino2022provably}, where it was proven that the ESFR correction operator has no influence on the conservative volume term when the inverse of the mass matrix is applied. To present ESFR as a DG scheme with a modified mass matrix, we introduce the ESFR correction operator as,

\begin{equation}
    \begin{split}
    &(\bm{K}_m)_{ij} \approx \sum_{s,v,w } c_{(s,v,w)} \int_{ {\Omega}_r} J_m^\Omega \partial^{(s,v,w)} \chi_i(\bm{\xi}^r) 
    \partial^{(s,v,w)}\chi_j(\bm{\xi}^r) d {\Omega_r}\\
    \to& 
    \bm{K}_m = 
     \sum_{s,v,w} c_{(s,v,w)}
    \partial^{(s,v,w)}\bm{\chi}(\bm{\xi}_v^r) ^T \bm{W}\bm{J}_m
   \partial^{(s,v,w)}\bm{\chi}(\bm{\xi}_v^r)
   =\sum_{s,v,w } c_{(s,v,w)}\Big(\bm{D}_\xi^s \bm{D}_\eta^v\bm{D}_\zeta^w \Big)^T\bm{M}_m\Big(\bm{D}_\xi^s \bm{D}_\eta^v\bm{D}_\zeta^w \Big),
    \end{split}\label{eq:Km}
\end{equation}

\noindent where $\bm{D}_\xi^s=\Big(\bm{M}^{-1}\bm{S}_\xi\Big)^s$ is the strong form differential operator raised to the power $s$, and similarly for the other reference directions. 

Therefore, recasting Eq.~(\ref{eq: cons DG strong}) as an ESFR scheme through the modified-norm framework~\cite{CicchinoNonlinearlyStableFluxReconstruction2021,cicchino2022provably} results in,

\begin{equation}
      \left(\bm{M}_m + \bm{K}_m\right) \frac{d}{d t} \hat{\bm{u}}_m(t)^T
    + \bm{\chi}(\bm{\xi}_v^r)^T \bm{W} \nabla^r\bm{\phi}(\bm{\xi}_v^r) \cdot \hat{\bm{f}}_m^r(t)^T
    + \sum_{f=1}^{N_f}\sum_{k=1}^{N_{fp}}
    \bm{\chi}(\bm{\xi}_{fk}^r)^T W_{fk} \hat{\bm{n}}^r \cdot
    \left[\bm{f}_m^{*,r} -\bm{\phi}(\bm{\xi}_{fk}^r)\hat{\bm{f}}_m^r(t)^T  \right]=\bm{0}^T.\label{eq: ESFR strong}
\end{equation}

\noindent Comparing Eq.~(\ref{eq: ESFR strong}) with Eq.~(\ref{eq: cons DG strong}), we note that all of the FR contributions arise from the modified mass matrix. 
\subsection{Nonlinearly Stable Flux Reconstruction}\label{sec: NSFR}

As in Chan~\cite{chan2018discretely,chan2019discretely,chan2019skew}, we utilize the general differential operator to recast Eq.~(\ref{eq: ESFR strong}) in a skew-symmetric two-point flux differencing form~\cite[Eq. (15)]{chan2019skew},

\begin{equation}
    \left(\bm{M}_m +\bm{K}_m\right)\frac{d}{d t} \hat{\bm{u}}_m(t)^T
     + \left[ \bm{\chi}(\bm{\xi}_v^r)^T\:  \bm{\chi}(\bm{\xi}_f^r)^T\right]
     \left[ \left(\Tilde{\bm{Q}}-\Tilde{\bm{Q}}^T\right) \odot \bm{F}_{m}^r \right]\bm{1}^T
     + \sum_{f=1}^{N_f}\sum_{k=1}^{N_{fp}}
    \bm{\chi}(\bm{\xi}_{fk}^r)^T W_{fk} \hat{\bm{n}}^r \cdot
    \bm{f}_m^{*,r}=\bm{0}^T,\label{eq: NSFR}
\end{equation}

\noindent where,

\begin{equation}
    \Tilde{\bm{Q}} - \Tilde{\bm{Q}}^T = 
    \begin{bmatrix}
        \bm{W}\nabla^r\bm{\phi}(\bm{\xi}_v^r) - \nabla^r\bm{\phi}(\bm{\xi}_v^r) ^T \bm{W}
        &  \sum_{f=1}^{N_f} \bm{\phi}(\bm{\xi}_f^r)^T \bm{W}_f \text{diag}(\hat{\bm{n}}^r_f) \\
    -  \sum_{f=1}^{N_f}  \bm{W}_f \text{diag}(\hat{\bm{n}}^r_f) \bm{\phi}(\bm{\xi}_f^r) & \bm{0} 
    \end{bmatrix}\in \mathbb{R}^{\left(N_v + N_{fp}\right)\times\left(N_v + N_{fp}\right)\times{d}}\label{eq: skew-symm qtilde oper}
\end{equation}

\noindent is the general hybridized skew-symmetric stiffness operator involving both volume and surface quadrature evaluations~\cite{chan2019skew} that has each $\Tilde{\bm{Q}}$ satisfying the SBP-like property

\begin{equation}
    \Tilde{\bm{Q}} + \Tilde{\bm{Q}}^T = 
    \begin{bmatrix}
        \bm{0} & \bm{0} \\
        \bm{0} & \bm{W}_f \text{diag}(\hat{\bm{n}}_f^r)
    \end{bmatrix}.\label{eq: SBP Qtilde}
\end{equation}

 \noindent The hybrid skew-symmetric stiffness operator is constructed solely from the flux basis since the flux basis discretely satisfies the SBP property on the quadrature nodes. Here, $\bm{\phi}(\bm{\xi}_f^r)$ be the matrix storing the flux basis evaluated at all surface quadrature nodes on the surface $f$. We let $\odot$ represent a Hadamard product with a 
 component-wise dot product on the vectors of length dimension. This is accomplished by performing a Hadamard product of size $\left(N_v + N_{fp}\right)\times\left(N_v +  N_{fp}\right)$ in each of the $d$ directions, then performing the summation on each $d$ direction from the dot product. Also, $\bm{F}_m^r$ is the matrix storing the reference two-point flux values,

\begin{equation}
     {  \left( \bm{F}^r_m\right)_{ij}} = 
     {\bm{f}_s\left(\Tilde{\bm{u}}_m(\bm{\xi}_{i}^r),\Tilde{\bm{u}}_m(\bm{\xi}_{j}^r)\right)}
     {\left(
       \frac{1}{2}\left(\bm{C}_m(\bm{\xi}_{i}^r)+ \bm{C}_m(\bm{\xi}_{j}^r) \right)\right)},\:\forall\: 1\leq i,j\leq N_v + N_{fp}.
\end{equation}

We chose to incorporate the splitting of the metric cofactor matrix~\cite{cicchino2022provably} within forming the reference two-point flux rather than incorporating it within a \enquote{physical} skew-symmetric stiffness operator~\cite{chan2019discretely,chan2019skew}. Although both forms are mathematically equivalent, the former allows us to exploit the tensor product structure of the reference basis stiffness operator to perform the Hadamard product at order $\mathcal{O}\left(n^{d+1} \right)$~\cite{CicchinoHadamardScaling2023} whereas the latter does not. We provide a detailed report on evaluations of Hadamard products at $\mathcal{O}\left(n^{d+1} \right)$ using tensor product basis in the technical report~\cite{CicchinoHadamardScaling2023}.

 The choice of two-point flux ${\bm{f}_s\left(\Tilde{\bm{u}}_m(\bm{\xi}_{i}^r),\Tilde{\bm{u}}_m(\bm{\xi}_{j}^r)\right)}$ dictates the physically relevant conserving properties of the scheme. 
 For an entropy conserving scheme, we choose a two-point flux that satisfies the Tadmor shuffle condition Eq.~(\ref{eq: tadmor shuffle}),

\begin{equation}
      \left(\bm{v}\left(\bm{\xi}_{i}^r\right) - \bm{v}\left(\bm{\xi}_{j}^r\right)\right) {\bm{f}_s\left(\Tilde{\bm{u}}_m(\bm{\xi}_{i}^r),\Tilde{\bm{u}}_m(\bm{\xi}_{j}^r)\right)^T}
       = \bm{\psi}\left(\bm{v}\left(\bm{\xi}_{i}^r\right)\right) 
       - \bm{\psi}\left(\bm{v}\left(\bm{\xi}_{j}^r\right)\right)
       ,\:\forall\: 1\leq i,j\leq N_v + N_{fp},\label{eq: Tadmor shuffle quad node}
\end{equation}

\noindent with the entropy-projected conservative variables $\Tilde{\bm{u}}_m(\bm{\xi}^r)$~\cite{chan2018discretely}. The entropy projected conservative variables are computed by interpolating the conservative solution to the volume quadrature nodes, and evaluating the entropy variables on the quadrature nodes. Then the entropy variables are projected onto the solution basis to obtain the $p$-th order modal coefficients. Lastly, the modal coefficients are then interpolated to the volume and surface quadrature nodes where the entropy-projected conservative variables are obtained by doing the inverse of the mapping. This process is critical because, in Eq.~(\ref{eq: Tadmor shuffle quad node}), the entropy potential $\bm{\psi}$ is a function of the entropy variables, not of the conservative variables. Thus, we need to discretely satisfy it with the entropy variables at the quadrature nodes. Also, for the stability condition, the residual is left multiplied by the modal coefficients of the entropy variables---thus we need the entropy variables projected on the solution basis. The entropy projected variables are summarized by,

\begin{equation}
    \Tilde{\bm{u}}\left( \bm{\xi}^r\right)
    = \bm{u}\left(\bm{\chi}(\bm{\xi}^r)\hat{\bm{v}}^T \right),\:
    \hat{\bm{v}}^T = \bm{\Pi}\left(\bm{v}\left(\bm{\chi}\left(\bm{\xi}_v^r\right)\hat{\bm{u}}^T\right) \right).
\end{equation}

The solution to the Tadmor shuffle condition in Eq.~(\ref{eq: Tadmor shuffle quad node}) is $\bm{f}_\text{EC} = \frac{\Delta \psi}{\Delta v}$. For Burgers' equation, there is a unique flux that satisfies the Tadmor shuffle equation~\cite{tadmor1984skew}, but for Euler, there are different possibilities depending on the variables chosen for expressing the jumps~\cite[Sec. 4.5]{chandrashekar2013kinetic}. Ismail and Roe~\cite{ismail2009affordable} were the first to introduce a change of variables to arrive at an explicit form for an entropy conserving flux. Motivated to conserve both entropy and kinetic energy, Chandrashekar~\cite{chandrashekar2013kinetic} derived an entropy conserving and kinetic energy preserving flux by considering the variables $\rho, \:u,\:$and $\beta=\frac{\rho}{2p}=\frac{1}{2RT}$, where $R$ is the universal gas constant and $T$ is the temperature. 
Importantly noted by both Ismail and Roe~\cite{ismail2009affordable} and Chandrashekar~\cite{chandrashekar2013kinetic}, for an entropy dissipative flux, the jump in the entropy variables must be recovered. If the jump in the conservative variables is employed, then for the Euler equations, the scheme is not purely entropy dissipative~\cite[Sec. 5]{chandrashekar2013kinetic}. Lastly, Ranocha~\cite{ranocha2018generalised} presented a systematic framework to constructing an entropy conserving and kinetic energy preserving flux from different combinations of thermodynamic variables for the Euler equations.

\section{Weight-Adjusted Inverse of Flux Reconstruction Mass Matrix}\label{sec: weight adjusted}

By immediate inspection, inverting $\bm{M}_m+\bm{K}_m$ on-the-fly is costly since they are both fully dense matrices in curvilinear coordinates. Instead, our goal is to make use of the tensor product structure and use sum-factorization~\cite{orszag1979spectral} techniques for efficient, low storage evaluations on-the-fly. 

Similar to Chan and Wilcox~\cite[Eq. (27)]{chan2019discretely}, we let $\bm{u}_J^T=\bm{J}_m\bm{u}_m^T$. 
We need to solve the matrix system,

\begin{equation}
    (\bm{M}_{1/J} +\bm{K}_{1/J}) \bm{u}_J^T = (\bm{M} +\bm{K})\bm{u}^T,
\end{equation}

\noindent where 

\begin{equation}
    \begin{split}
        \bm{M}_{1/J}=\bm{\chi}(\bm{\xi}_v^r)^T\bm{W}\bm{J}_m^{-1}\bm{\chi}(\bm{\xi}_v^r),\\
        \bm{K}_{1/J}=\sum_{s,v,w } c_{(s,v,w)}\left(\bm{D}_\xi^s \bm{D}_\eta^v\bm{D}_\zeta^w \right)^T\bm{M}_{1/J}\left(\bm{D}_\xi^s \bm{D}_\eta^v\bm{D}_\zeta^w \right).
    \end{split}\label{eq: over J mass}
\end{equation}

Thus, we can express the respective weight-adjusted system FR mass matrix and inverse of the FR mass matrix as,

\begin{equation}
\begin{split}
    \left(\bm{M}_m + \bm{K}_m\right) \approx \left(\bm{M} + \bm{K}\right) \left( \bm{M}_{1/J} +\bm{K}_{1/J}\right)^{-1}\left(\bm{M} + \bm{K}\right), \\
    \left(\bm{M}_m + \bm{K}_m\right)^{-1} \approx \left(\bm{M} + \bm{K}\right)^{-1} \left( \bm{M}_{1/J} +\bm{K}_{1/J}\right)\left(\bm{M} + \bm{K}\right)^{-1}.
    \end{split}\label{eq: weight adj inv}
\end{equation}

It is important to note that $\left(\bm{M} + \bm{K}\right)^{-1}$ has the same local value for each element. Therefore, computing $\left(\bm{M}_m + \bm{K}_m\right)^{-1}$ via Eq.~(\ref{eq: weight adj inv}) only depends on inverting the diagonal matrix storing the determinant of the Jacobian on-the-fly.

As detailed by Chan and Wilcox~\cite{chan2019discretely}, the accuracy of the weight-adjusted inverse approximation is solely dependent on the accuracy of the projection. In Cicchino~\textit{et al}.~\cite{cicchino2022provably}, the ESFR projection operator was demonstrated to maintain the orders of convergence up to the upper limit found by Castonguay~\cite[Fig. 3.6]{castonguay_phd}. Thus, we can make further improvements by introducing the ESFR projection operator in reference space,

\begin{equation}
\begin{split}
      &  \Tilde{\bm{\Pi}}(\bm{\xi}_v^r) =  \Tilde{\bm{\Pi}}({\xi}_v)
        \otimes  \Tilde{\bm{\Pi}}({\eta}_v)
        \otimes  \Tilde{\bm{\Pi}}({\zeta}_v),\\
     &    \Tilde{\bm{\Pi}}({\xi}_v) = \left(\bm{M}(\xi_v)+\bm{K}(\xi_v)\right)^{-1} \bm{\chi}\left({\xi}_v\right)^T\bm{W}(\xi_v),
\end{split}\label{eq: Fr projection}
\end{equation}

\noindent where $\bm{M}(\xi_v)+\bm{K}(\xi_v)$ is the one-dimensional modified mass matrix, $ \bm{\chi}\left({\xi}_v\right)$ is the one-dimensional basis function, and $\bm{W}(\xi_v)$ stores the 1D quadrature weights. 

Therefore, using the reference ESFR projection operator in Eq.~(\ref{eq: Fr projection}), the weight-adjusted inverse to the ESFR modified mass matrix is,

\begin{equation}
    \left(\bm{M}_m + \bm{K}_m\right)^{-1}
    \approx
    \Tilde{\bm{\Pi}}(\bm{\xi}_v^r) \left(\bm{W}\bm{J}_m\right)^{-1}\Tilde{\bm{\Pi}}(\bm{\xi}_v^r)^T.\label{eq: weight-adj ESFR mass}
\end{equation}

Using Eq.~(\ref{eq: weight-adj ESFR mass}) to approximate the inverse reduces the computational cost to inverting a diagonal matrix, $\bm{W}\bm{J}_m$, on-the-fly, while being able to exploit the tensor product structure of the reference ESFR projection operator. Lastly, we need to show that the ESFR weight-adjusted inverse preserves the orders of convergence.

\begin{thm}
    The weight-adjusted inverse for the ESFR mass matrix in Eq.~(\ref{eq: weight-adj ESFR mass}) preserves the order of convergence $\mathcal{O}(h^{p+1})$.
\end{thm}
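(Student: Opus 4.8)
The plan is to follow the weight-adjusted accuracy analysis of Chan and Wilcox~\cite{chan2019discretely,chan2017weight}, adapted to the ESFR modified-norm setting. The central observation is that the weight-adjusted inverse in Eq.~(\ref{eq: weight-adj ESFR mass}) differs from the exact inverse $\left(\bm{M}_m+\bm{K}_m\right)^{-1}$ only through the polynomial approximation of the geometric Jacobian $J_m^\Omega$ and its reciprocal. Consequently, the approximation error is governed entirely by how well these geometric factors are captured by the projection onto the broken Sobolev space, which is exactly what the ESFR projection operator $\Tilde{\bm{\Pi}}$ delivers.

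First I would recast the weight-adjusted operator so that its action defines a discrete weight-adjusted projection: given a smooth target function $u$, let $u_h^{WA}=\bm{\chi}(\bm{\xi}_v^r)\hat{\bm{u}}^{WA}$ be the approximation obtained by applying Eq.~(\ref{eq: weight-adj ESFR mass}) to the load vector that projects $u$ in the $J_m^\Omega$-weighted modified inner product. I would then compare $u_h^{WA}$ against both the exact ESFR projection of $u$ and the function $u$ itself. Using the tensor-product factorization $\Tilde{\bm{\Pi}}(\bm{\xi}_v^r)=\Tilde{\bm{\Pi}}({\xi}_v)\otimes\Tilde{\bm{\Pi}}({\eta}_v)\otimes\Tilde{\bm{\Pi}}({\zeta}_v)$ of Eq.~(\ref{eq: Fr projection}), the three-dimensional estimate collapses onto the one-dimensional reference projection, in which the modified mass matrix $\bm{M}(\xi_v)+\bm{K}(\xi_v)$ is independent of the geometry and all geometric information enters solely through the diagonal factor $\left(\bm{W}\bm{J}_m\right)^{-1}$.

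The key steps, in order, are: (i) establish that $\Tilde{\bm{\Pi}}$ reproduces polynomials of degree $p$ exactly and attains the best-approximation rate $\mathcal{O}(h^{p+1})$ in the broken Sobolev norm, which is the ESFR projection convergence property demonstrated in Cicchino~\textit{et al}.~\cite{cicchino2022provably} up to the resolution limit reported by Castonguay~\cite[Fig.~3.6]{castonguay_phd}; (ii) bound the discrepancy between the exact weighted action $\int_{\bm{\Omega}_r}J_m^\Omega\,uv\,d\bm{\Omega}_r$ and its weight-adjusted surrogate by the product of the projection error of $J_m^\Omega$ with the resolved norms of $u$ and $v$, following the Chan--Wilcox consistency lemma; and (iii) combine these through a triangle inequality, invoking smoothness of the curvilinear mapping so that $J_m^\Omega$ and $1/J_m^\Omega$ are themselves approximated at rate $h^{p+1}$, to conclude $\|u-u_h^{WA}\|\lesssim h^{p+1}$.

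The main obstacle is controlling the ESFR correction operator $\bm{K}_m$ within the consistency estimate. Unlike the pure-DG weight-adjusted result, the modified mass matrix carries the high-order derivative contributions $\partial^{(s,v,w)}$ weighted by $c_{(s,v,w)}J_m^\Omega$, so one must verify that replacing the geometry by its polynomial projection in these broken-Sobolev terms does not introduce an error of order lower than $h^{p+1}$. I expect this to hinge on the complete-norm structure noted after Eq.~(\ref{eq: ESFR fund assumpt})---namely that $\Tilde{\bm{\Pi}}$ is the orthogonal projector in the full ESFR norm and therefore interacts cleanly with the strong-form derivative operators $\bm{D}_\xi^s\bm{D}_\eta^v\bm{D}_\zeta^w$ appearing in $\bm{K}_m$---so that the correction term inherits the same $h^{p+1}$ bound rather than degrading it.
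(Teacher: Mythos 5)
Your high-level strategy is the same as the paper's: both arguments reduce the theorem to (a) Chan and Wilcox's accuracy result for the DG weight-adjusted inverse~\cite[Thm 4]{chan2019discretely} and (b) the claim that the ESFR projection operator $\Tilde{\bm{\Pi}}$ of Eq.~(\ref{eq: Fr projection}) approximates as accurately as the $L^2$ (DG) projection for $c\leq c_+$. The paper goes no further than that: it justifies (b) by the filtered-DG equivalence of the correction functions~\cite{allaneau_connections_2011,zwanenburg_equivalence_2016} together with the numerical evidence of Castonguay~\cite{castonguay_phd} and Cicchino~\textit{et al}.~\cite{cicchino2022provably}, and it does not attempt the consistency analysis you sketch.

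Two of the sub-steps you propose would fail if executed as written, and they are precisely the steps the paper avoids. First, $\Tilde{\bm{\Pi}}$ does \emph{not} reproduce polynomials of degree $p$ exactly when $c>0$: feeding it the nodal values of $\bm{\chi}(\bm{\xi}_v^r)\hat{\bm{w}}^T$ returns $\left(\bm{M}+\bm{K}\right)^{-1}\bm{M}\hat{\bm{w}}^T$, which equals $\hat{\bm{w}}^T$ only when $\bm{K}\hat{\bm{w}}^T=\bm{0}$, i.e.\ only for polynomials of degree at most $p-1$, because $\bm{K}$ is assembled from $p$-th derivative outer products. Hence the $\mathcal{O}(h^{p+1})$ rate cannot be obtained from degree-$p$ exactness plus a Bramble--Hilbert argument; the whole difficulty is that the top mode is damped by an $h$-independent factor, which is exactly why the rate is conditional on $c\leq c_+$ and why the paper falls back on numerical evidence rather than an analytic estimate. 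Second, $\Tilde{\bm{\Pi}}$ is not the orthogonal projector in the full ESFR norm: it solves $\left(\bm{M}+\bm{K}\right)\hat{\bm{u}}^T=\bm{\chi}(\bm{\xi}_v^r)^T\bm{W}\bm{u}_v^T$, whose right-hand side carries only the $L^2$ pairing, so it is an oblique projection and does not automatically inherit best approximation in either norm, nor does it "interact cleanly" with the $\bm{D}_\xi^s\bm{D}_\eta^v\bm{D}_\zeta^w$ operators in the way you anticipate. If you want a self-contained proof rather than the paper's citation-based one, the missing ingredient is an analytic bound showing that $\|(\bm{I}-(\bm{M}+\bm{K})^{-1}\bm{M})\bm{\Pi}u\|$ remains $\mathcal{O}(h^{p+1})$ for smooth $u$ and $c\leq c_+$; supplying that would in fact strengthen the paper, whose step (b) currently rests on numerical observation.
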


\begin{proof}
From Chan and Wilcox~\cite[Thm 4]{chan2019discretely}, the weight-adjusted inverse is of order $h^{\text{min}(r,p+1)+1}$ for \\$u\in W^{r,2}\left(\bm{\Omega}_m\right)$. With the choice of ESFR correction functions from Vincent~\textit{et al}.~\cite{vincent_new_2011}, and by rewriting the correction functions through a filtered DG form~\cite{allaneau_connections_2011,zwanenburg_equivalence_2016}, the ESFR projection operator is of the same order as the DG projection operator for values of $c\leq c_{+}$. This was numerically shown by Castonguay~\cite{castonguay_phd} for the ESFR correction functions and by Cicchino~\textit{et al}.~\cite{cicchino2022provably} for the ESFR projection of the volume split-form divergence in curvilinear coordinates. Therefore, since the ESFR projection operator is of the same order of accuracy as the DG projection operator, the ESFR weight-adjusted inverse holds the same order of accuracy as the DG result in Chan and Wilcox~\cite{chan2019discretely}.
\end{proof}

In Sec.~\ref{sec: manfac sol}, we demonstrate numerically that the numerical scheme with the weight-adjusted inverse for ESFR discretizations maintains the correct orders of accuracy 
for values $c\leq c_{+}$~\cite{vincent_new_2011,castonguay_phd,cicchino2022provably}. 
\section{Scalable Evaluations}\label{sec: scalable eval}

With the intent of the high-order solver being used on next generation hardware, on both CPUs and GPUs, we wish that the solver scales at the lowest order possible, and has a low memory footprint. In this section, we briefly review sum-factorization techniques to drastically reduce the memory footprint and flop count for evaluating the NSFR discretization in Eq.~(\ref{eq: NSFR}). For this section, we will let $n=p+1$ and $d$ still represent the dimension of the operators. By observation, the flops in the NSFR discretization Eq.~(\ref{eq: NSFR}) are dominated by dense matrix-vector multiplications, along with a dense Hadamard product. We resolve the scaling issue for the matrix-vector multiplications with sum-factorization~\cite{orszag1979spectral}, and the Hadamard product with the sum-factorized Hadamard product from Cicchino and Nadarajah~\cite[Thm. 2.1]{CicchinoHadamardScaling2023}. Following the roofline model~\cite{kolev2021efficient}, all operations would ideally have a large arithmetic intensity (A.I.), $\text{A.I.}=\frac{\text{flops}}{\text{bytes}}$, with it being a function of $n$.

Let's consider interpolating the modal coefficients $\hat{\bm{u}}$ to the volume quadrature nodes by the basis functions $\bm{\chi}\left(\bm{\xi}_v^r\right)$. Since, $\bm{\chi}\left(\bm{\xi}_v^r\right) = \bm{\chi}\left({\xi}_v^r\right)\otimes\bm{\chi}\left({\eta}_v^r\right)\otimes \bm{\chi}\left({\zeta}_v^r \right) \in \bm{M}_{n^{d}\times n^{d}}\left(\mathbb{R}\right)$ is the tensor product of the $n$-sized one-dimensional basis functions operators $\bm{\chi}\left({\xi}_v^r\right)$, then $\bm{u}_v^T = \bm{\chi}\left(\bm{\xi}_v^r\right) \hat{\bm{u}}^T$ is evaluated directly in $n^{2d}$ flops. Orszag~\cite{orszag1979spectral} observed that the tensor product structure can be exploited, and the matrix-vector multiplication can be evaluated in each direction independently. The flop count for sum-factorization is $dn^{d+1}$. As extensively detailed in Karniadakis and Sherwin~\cite[Chapters 3,4]{karniadakis2013spectral}, by using quadrilateral and hexahedral reference elements, the basis operations can straightforwardly use sum-factorization. For triangular, tetrahedral, prismatic, and pyramidic-based elements, Karniadakis and Sherwin~\cite[Chapters 3,4]{karniadakis2013spectral} derived orthogonal tensor product basis functions to exploit sum-factorization in high-order codes. 

We will present the sum-factorization operations as a sequence of matrix-matrix multiplications alike Cantwell~\textit{et al}.~\cite{cantwell2011h} to leverage optimizations in the BLAS subsystem. We let the $\xi$-direction run the fastest, then $\eta$, and $\zeta$ running the slowest. First, we rearrange the modal coefficient's of $\hat{\bm{u}}\to \hat{\bm{u}}^{n_\xi :n_\eta n_\zeta}\in \bm{M}_{n_\xi \times n_\eta n_\zeta}\left(\mathbb{R}\right)$, where $n_\xi$ is the one-dimensional basis size in the $\xi$-direction, similarly for $n_\eta$ and $n_\zeta$. The transformation makes $\hat{\bm{u}}^{n_\xi :n_\eta n_\zeta}$ a matrix where its rows vary in the $\xi$-direction, and its columns vary in $\eta$ and $\zeta$. This allows us to perform a one-dimensional matrix-vector multiplication in $\xi$, at $n^2$ flops, and perform it $n^{d-1}$ times for the $\eta$ and $\zeta$ combinations in the columns. Alike Cantwell~\textit{et al}.~\cite[Sec. 2.3.1]{cantwell2011h}, following the same steps for the other directions arrives at,

\begin{equation}
    \begin{split}
        \bm{q}_0^{n_\xi : n_\eta n_\zeta} = \bm{\chi}\left(\xi_v^r\right)\hat{\bm{u}}^{n_\xi :n_\eta n_\zeta},\\
        \bm{q}_1^{n_\eta : n_\zeta n_\xi } = \bm{\chi}\left(\eta_v^r\right){\bm{q}_0}^{n_\eta :n_\zeta n_\xi},\\
        \bm{q}_2^{n_\zeta : n_\xi n_\eta } = \bm{\chi}\left(\zeta_v^r\right){\bm{q}_1}^{n_\zeta :n_\xi n_\eta},\\
        \bm{u}_v = \bm{q}_2^{n_\xi n_\eta n_\zeta : 1}.
    \end{split}\label{eq: sum-factorization}
\end{equation}

Each line in Eq.~(\ref{eq: sum-factorization}) has $n^{2}n^{d-1}=n^{d+1}$ flops, and then it is repeated for the $d$-lines, arriving at $dn^{d+1}$ flops. The memory footprint involves loading the $d$ one-dimensional basis operator matrices of size $n^2$, loading the $n^d$ vector, writing the final $n^{d}$ vector, and writing/loading the $n^{d}$ sub-vectors $d$-times. Thus, the arithmetic intensity for Eq.~(\ref{eq: sum-factorization}) is $\text{A.I.}=\frac{dn^{d+1}}{(d+2)n^{d}+dn^2}$. A detailed analysis of the strong scaling and roofline model for high-order methods, such as continuous Galerkin, DG, and hybrid DG, while exploiting sum-factorization on quadrilateral/hexahedral meshes is found in Kronbichler and Wall~\cite{kronbichler2018performance} and Moxey~\textit{et al}.~\cite{moxey2020efficient} for triangular based elements. 

For the evaluation of the mass inverse, and other multi-step operators, each step is evaluated consecutively. For example, let's consider applying the weight-adjusted mass matrix inverse approximation, Eq.~(\ref{eq: weight-adj ESFR mass}), on the right-hand-side. We would first use sum-factorization on multiplying $\Tilde{\bm{\Pi}}\left(\bm{\xi}_v^r\right)^T $ to the right-hand-side at $dn^{d+1}$ flops. Then we multiply the inverse of a diagonal operator storing the determinant of the Jacobian multiplied with the quadrature weights at $n^d$ flops. Lastly, we use sum-factorization to multiply $ \Tilde{\bm{\Pi}}\left(\bm{\xi}_v^r\right)$ for an additional $dn^{d+1}$ flops. Rather than building a mass matrix at $n^{2d}$ flops, inverting it at $n^{3d}$ flops, and performing one matrix-vector operation at $n^{2d}$ flops, we used the {\it matrix-free} sum-factorization approach to evaluate it in three separate steps, at a total of $2dn^{d+1}+n^d$ flops.

For the Hadamard product in Eq.~(\ref{eq: NSFR}), we use the sum-factorized Hadamard product from Cicchino and Nadarajah~\cite[Thm. 2.1]{CicchinoHadamardScaling2023}. The volume-volume Hadamard product from the upper left block of Eq.~(\ref{eq: skew-symm qtilde oper}) is evaluated in $dn^{d+1}$ flops. The surface-volume Hadamard products from the upper right and lower left blocks in Eq.~(\ref{eq: skew-symm qtilde oper}) are each evaluated in $dn^d$ flops. 
Since sum-factorization evaluates a matrix-vector product in $dn^{d+1}$ flops, the computational cost difference between the divergence of the flux in conservative strong DG and the Hadamard product in NSFR is in the evaluation of the two-point flux. 

In Sec.~\ref{sec: comparison}, for the Taylor-Green vortex problem on a nonsymmetrically warped curvilinear mesh, we numerically verify the scaling at order $\mathcal{O}\left(n^{d+1}\right)$ for the NSFR discretization in Eq.~(\ref{eq: NSFR}) as compared to the conservative DG scheme in Eq.~(\ref{eq: cons DG strong}) with and without overintegration. We also numerically compare the wall clock time for the flow simulation. 

\section{Conserved Properties of NSFR}\label{sec: conserved prop}

In this section we provide the three main theorems of this work: free-stream preservation, global conservation, and entropy stability for NSFR in a weight-adjusted framework.

\subsection{Free-stream Preservation}\label{sec: freestream pres}

The first conserved quantity that NSFR preserves is free-stream preservation. If the free-stream is not preserved, then the nonlinear metric terms would introduce cross-wind into the flow that drastically destroy the fidelity of the solver~\cite{visbal2002use,kopriva2006metric}. Thus, for curvilinear coordinates, it is necessary that the discretization is provably free-stream preserving.

\begin{thm}
    The NSFR discretization in Eq.~(\ref{eq: NSFR}) with the weight-adjusted low-storage mass matrix inverse from Eq.~(\ref{eq: weight-adj ESFR mass}) is free-stream preserving.
\end{thm}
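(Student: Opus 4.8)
The plan is to substitute a constant free-stream state into Eq.~(\ref{eq: NSFR}) and show that the spatial residual (its second and third terms) vanishes identically, so that the left-multiplying invertible operator $\bm{M}_m+\bm{K}_m$ (or its weight-adjusted approximation Eq.~(\ref{eq: weight-adj ESFR mass})) forces $\frac{d}{dt}\hat{\bm{u}}_m^T=\bm{0}^T$. First I would fix the modal coefficients so that $\bm{\chi}(\bm{\xi}^r)\hat{\bm{u}}_m^T=\bm{u}_\infty$ at every volume and surface quadrature node, with $\bm{u}_\infty$ constant. Because the entropy variables $\bm{v}(\bm{u}_\infty)$ are then constant, their projection onto the solution basis is exact and the interpolation/inverse-map sequence defining the entropy-projected variables returns $\Tilde{\bm{u}}_m(\bm{\xi}^r)=\bm{u}_\infty$ at all nodes. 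Consistency of the two-point flux, $\bm{f}_s(\bm{u}_\infty,\bm{u}_\infty)=\bm{f}(\bm{u}_\infty)=:\bm{f}_\infty$, and of the numerical surface flux, $\bm{f}_m^{*}=\bm{f}_\infty$, then reduce all flux evaluations to the single constant physical flux $\bm{f}_\infty$.

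With a constant physical flux, the reference two-point flux matrix collapses to $(\bm{F}_m^r)_{ij}=\bm{f}_\infty\cdot\tfrac12\big(\bm{C}_m(\bm{\xi}_i^r)+\bm{C}_m(\bm{\xi}_j^r)\big)$, so $\bm{f}_\infty$ factors out of the Hadamard product and only the metric cofactor terms survive. The key algebraic step is to split the average into its row-index ($\bm{C}_i$) and column-index ($\bm{C}_j$) contributions. The column contribution, once contracted against $\bm{1}^T$, telescopes through the skew-symmetric structure $\Tilde{\bm{Q}}-\Tilde{\bm{Q}}^T$, while the row contribution produces the row sums of the operator, which by the SBP-like property Eq.~(\ref{eq: SBP Qtilde}) collapse to the pure surface boundary matrix. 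I would then recognize that the remaining volume--surface hybrid term is exactly the discrete reference divergence of the metric cofactor contracted with the constant flux, i.e. $\bm{f}_\infty\cdot(\nabla^r\cdot\bm{C}_m)$, paired against the surface term $\sum_{f,k}\bm{\chi}(\bm{\xi}_{fk}^r)^TW_{fk}\hat{\bm{n}}^r\cdot\bm{f}_m^{*,r}$.

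The discrete GCL, Eq.~(\ref{eq: GCL}), guarantees $\nabla^r\cdot\bm{C}_m=\bm{0}$ for the conservative-curl metrics, which is precisely what makes the combined volume-plus-surface contribution of the constant flux cancel to machine zero. This is the step I expect to be the main obstacle, since it requires carefully matching the averaged-cofactor splitting of the hybridized operator against the surface quadrature contribution and invoking the SBP property jointly with the discrete metric identity; the choice to carry the cofactor splitting inside the two-point flux is what makes the telescoping clean. Once this cancellation is established, the spatial residual is $\bm{0}^T$, and since $\bm{M}_m+\bm{K}_m$ is invertible (symmetric positive definite over the admissible ESFR range $c\leq c_+$), as is its weight-adjusted surrogate, we conclude $\frac{d}{dt}\hat{\bm{u}}_m^T=\bm{0}^T$, establishing free-stream preservation independently of the choice of ESFR correction function.
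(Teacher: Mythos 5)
Your proposal is correct and follows essentially the same route as the paper's proof: substitute the constant state, use the SBP-like property of $\Tilde{\bm{Q}}$ to cancel the numerical-flux surface term, expand the Hadamard product of the skew-symmetric operator with the averaged-cofactor (central) flux in flux-differencing form, and kill the volume contribution with the discrete GCL. The only point the paper makes explicit that you leave as an acknowledged obstacle is how the residual surface terms cancel: this requires the metric-consistency condition that the flux-basis interpolation of the volume cofactor to a facet node reproduces the surface cofactor, i.e. $\bm{\phi}(\bm{\xi}_{fk}^r)\left(\bm{\alpha}\bm{C}_m(\bm{\xi}_{v}^r)\right) = \bm{\alpha}\bm{C}_m(\bm{\xi}_{fk}^r)$, which is guaranteed by the conservative-curl construction of the metrics with consistent surface normals.
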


\begin{proof}
    Similar to Cicchino~\textit{et al}.~\cite[Sec. 5.1]{cicchino2022provably}, we substitute $\bm{f}_m=\bm{\alpha}=\text{constant}$ and $\frac{d\hat{\bm{u}}_m(t)}{dt}=\bm{0}$ into Eq.~(\ref{eq: NSFR}),

    \begin{equation}
       \left[ \bm{\chi}(\bm{\xi}_v^r)^T\:  \bm{\chi}(\bm{\xi}_f^r)^T\right]
     \left[ \left(\Tilde{\bm{Q}}-\Tilde{\bm{Q}}^T\right) \odot \bm{F}_{m}^r \right]\bm{1}^T
     + \sum_{f=1}^{N_f}\sum_{k=1}^{N_{fp}}
    \bm{\chi}(\bm{\xi}_{fk}^r)^T W_{fk} \hat{\bm{n}}^r \cdot
    \bm{\alpha}\bm{C}_m(\bm{\xi}_{fk}^r) .\label{eq: freestream 1}
    \end{equation}


\noindent Substituting the SBP property Eq.~(\ref{eq: SBP Qtilde}) for $\Tilde{\bm{Q}}^T$ into Eq.~(\ref{eq: freestream 1}) results in,

\begin{equation}
     \left[ \bm{\chi}(\bm{\xi}_v^r)^T\:  \bm{\chi}(\bm{\xi}_f^r)^T\right]
     \left[ 2 \Tilde{\bm{Q}} \odot \bm{F}_{m}^r \right]\bm{1}^T
     + \sum_{f=1}^{N_f}\sum_{k=1}^{N_{fp}}
    \bm{\chi}(\bm{\xi}_{fk}^r)^T W_{fk} \hat{\bm{n}}^r \cdot
   \left[ \bm{\alpha}\bm{C}_m(\bm{\xi}_{fk}^r) - \bm{\alpha}\bm{C}_m(\bm{\xi}_{fk}^r)\right],
\end{equation}

\noindent where the surface integral cancels off. For the Hadamard product, we note that the physical flux is constant, rendering the reference flux $\left(\bm{F}_m^r\right)_{ij} = \frac{1}{2}\bm{\alpha}\left(\bm{C}_m(\bm{\xi}_{i}^r) +\bm{C}_m(\bm{\xi}_{j}^r) \right)$ equal to the central flux with respect to the metric cofactor matrix. Using Fisher and Carpenter~\cite[Thm 3.1 and 3.2]{fisher2012high} and Gassner~\textit{et al}.~\cite[Eq. (3.5)]{gassner2016split}, we can expand the Hadamard product as,

\begin{equation}
    \begin{split}
&       \left[ \bm{\chi}(\bm{\xi}_v^r)^T\:  \bm{\chi}(\bm{\xi}_f^r)^T\right]
     \left[ 2 \Tilde{\bm{Q}} \odot \bm{F}_{m}^r \right]\bm{1}^T
     = \bm{\chi}(\bm{\xi}_v^r)^T \bm{W}\nabla^r\bm{\phi}\left(\bm{\xi}_v^r\right) \cdot\left(\bm{\alpha} \bm{C}_m\left(\bm{\xi}_v^r\right)\right)\\
&     - \bm{\chi}(\bm{\xi}_v^r)^T  \sum_{f=1}^{N_f}\sum_{k=1}^{N_{fp}}
    \left[\bm{\phi}(\bm{\xi}_{fk}^r)^T W_{fk} \hat{\bm{n}}^r \cdot
   \bm{\phi}(\bm{\xi}_{fk}^r)\left(\bm{\alpha}\bm{C}_m(\bm{\xi}_{v}^r)\right)
   -\frac{1}{2}\bm{\phi}(\bm{\xi}_{fk}^r)^T W_{fk} \hat{\bm{n}}^r \cdot \left(\bm{\alpha}\bm{C}_m(\bm{\xi}_{f,k}^r)\right)\right]\\
&   +\frac{1}{2}\sum_{f=1}^{N_f}\sum_{k=1}^{N_{fp}} \bm{\chi}(\bm{\xi}_{f,k}^r)^TW_{fk} \hat{\bm{n}}^r \cdot
   \bm{\phi}(\bm{\xi}_{fk}^r)\left(\bm{\alpha}\bm{C}_m(\bm{\xi}_{v}^r)\right).
    \end{split}
\end{equation}

\noindent The volume term on the right-hand side vanishes if the GCL in Eq.~(\ref{eq: GCL}) is satisfied discretely. In Cicchino~\textit{et al}.~\cite[Sec. 5]{cicchino2022provably}, we provided a detailed review to construct the metric terms to discretely satisfy GCL with consistent surface normals. The review in Cicchino~\textit{et al}.~\cite[Sec. 5]{cicchino2022provably} first summarized Kopriva's~\cite{kopriva2006metric} derivation of the conservative and invariant curl forms, then followed the work by Abe~\textit{et al}.~\cite{abe2015freestream} to ensure consistency on the surfaces for different flux and grid nodes. By using the consistency condition from the mapping shape functions from the grid nodes to the flux nodes, then
$ \bm{\phi}(\bm{\xi}_{fk}^r)\left(\bm{\alpha}\bm{C}_m(\bm{\xi}_{v}^r)\right) = \bm{\alpha}\bm{C}_m(\bm{\xi}_{f,k}^r)$, and $\bm{\phi}(\bm{\xi}_{fk}^r)\bm{\chi}(\bm{\xi}_{v}^r)=\bm{\chi}(\bm{\xi}_{fk}^r)$, which sees the surface terms eliminated. Therefore, the NSFR discretization is free-stream preserving for vector-valued conservation laws.

\end{proof}

%
%

\subsection{Global Conservation}\label{sec: conservation}

A crucial property for discretizations that approximate conservation laws is that they discretely satisfy conservation. Continuously, the conservation property is obtained by performing divergence theorem on the flux, $\int_{\bm{\Omega}} \frac{\partial u}{\partial t} d\bm{\Omega} =- \int_{\bm{\Omega}} \nabla \cdot \bm{f} d\bm{\Omega} = -\int_{\bm{\Gamma}} \bm{f} \cdot \hat{\bm{n}} d\bm{\Gamma} $.

\begin{thm}
    The NSFR discretization in Eq.~(\ref{eq: NSFR}) with the weight-adjusted low-storage mass matrix inverse from Eq.~(\ref{eq: weight-adj ESFR mass}) is globally conserving.
\end{thm}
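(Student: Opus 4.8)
The plan is to test the semi-discrete system against the constant function and show that the only surviving term is the boundary flux, which then telescopes across element interfaces. Let $\hat{\bm{1}}$ denote the coefficient vector of unity in the solution basis, i.e. $\bm{\chi}(\bm{\xi}^r)\hat{\bm{1}}^T = 1$, and observe that the element-wise conserved quantity is $\int_{\bm{\Omega}_m} u_m^h\, d\bm{\Omega} = \hat{\bm{1}}\,\bm{M}_m\,\hat{\bm{u}}_m(t)^T$. I would left-multiply Eq.~(\ref{eq: NSFR}) by $\hat{\bm{1}}$. The first step is to dispatch the correction operator in the mass term: since every summand of $\bm{K}_m$ in Eq.~(\ref{eq:Km}) carries a reference derivative $\bm{D}_\xi^s\bm{D}_\eta^v\bm{D}_\zeta^w$ with $s+v+w\geq p\geq 1$, and the differentiation operators annihilate a constant from the left, we have $\hat{\bm{1}}\,\bm{K}_m=\bm{0}$. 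Hence $\hat{\bm{1}}(\bm{M}_m+\bm{K}_m)\tfrac{d}{dt}\hat{\bm{u}}_m^T = \tfrac{d}{dt}\!\int_{\bm{\Omega}_m}u_m^h\,d\bm{\Omega}$, so the left-hand term already collapses to the time-rate of the conserved quantity.

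Next I would kill the volume/hybrid flux-differencing term. Because $\bm{\chi}(\bm{\xi}_v^r)\hat{\bm{1}}^T$ and $\bm{\chi}(\bm{\xi}_f^r)\hat{\bm{1}}^T$ are vectors of ones, left-multiplication converts $[\bm{\chi}(\bm{\xi}_v^r)^T\ \bm{\chi}(\bm{\xi}_f^r)^T]$ into the all-ones row $\bm{1}^T$ of length $N_v+N_{fp}$, so the term becomes $\bm{1}^T\big[(\Tilde{\bm{Q}}-\Tilde{\bm{Q}}^T)\odot\bm{F}_m^r\big]\bm{1}^T = \sum_{i,j}\big[(\Tilde{\bm{Q}}-\Tilde{\bm{Q}}^T)\odot\bm{F}_m^r\big]_{ij}$. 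For each physical direction $\Tilde{\bm{Q}}-\Tilde{\bm{Q}}^T$ is skew-symmetric while the two-point-flux matrix $\bm{F}_m^r$ is symmetric (the flux $\bm{f}_s$ is symmetric in its two arguments and the metric average $\tfrac{1}{2}(\bm{C}_m(\bm{\xi}_i^r)+\bm{C}_m(\bm{\xi}_j^r))$ is symmetric in $i,j$); their Hadamard product is therefore antisymmetric in $(i,j)$ and the double sum vanishes. The remaining surface term reduces to $\sum_{f,k}W_{fk}\,\hat{\bm{n}}^r\cdot\bm{f}_m^{*,r}$, which by the normal transformation Eq.~(\ref{eq: normals def}) equals the discrete physical boundary integral $\int_{\bm{\Gamma}_m}\hat{\bm{n}}_m\cdot\bm{f}_m^*\,d\bm{\Gamma}$. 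This produces the element-local balance $\tfrac{d}{dt}\!\int_{\bm{\Omega}_m}u_m^h\,d\bm{\Omega} + \int_{\bm{\Gamma}_m}\hat{\bm{n}}_m\cdot\bm{f}_m^*\,d\bm{\Gamma}=0$; summing over all elements, the single-valued conservative numerical flux $\bm{f}_m^*$ and the opposing outward normals $\hat{\bm{n}}_m=-\hat{\bm{n}}_{m'}$ cancel every interior face, leaving only the physical boundary $\bm{\Gamma}$, which is the global conservation statement.

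The delicate step, and the one I expect to be the main obstacle, is verifying that replacing $(\bm{M}_m+\bm{K}_m)^{-1}$ by the weight-adjusted inverse of Eq.~(\ref{eq: weight-adj ESFR mass}) does not break the balance, since here the mass operator does not simply multiply zero as it did in the free-stream proof. The argument above used $\hat{\bm{1}}\bm{K}_m=\bm{0}$ for the exact operator; for the weight-adjusted form I would instead exploit that $\hat{\bm{1}}$ lies in the left kernel of \emph{every} correction operator that appears, namely $\hat{\bm{1}}\bm{K}=\bm{0}$ and $\hat{\bm{1}}\bm{K}_{1/J}=\bm{0}$, so that $\hat{\bm{1}}(\bm{M}+\bm{K})=\hat{\bm{1}}\bm{M}$ and $\hat{\bm{1}}(\bm{M}_{1/J}+\bm{K}_{1/J})=\hat{\bm{1}}\bm{M}_{1/J}$. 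Because the low-storage inverse is assembled from the ESFR projection operator $\Tilde{\bm{\Pi}}$, which reproduces constants exactly, the constant test vector passes cleanly through the weight-adjusted inverse and the surviving balance is again purely the boundary flux; the quantity conserved exactly is the weight-adjusted mass $\hat{\bm{1}}\big[(\bm{M}+\bm{K})(\bm{M}_{1/J}+\bm{K}_{1/J})^{-1}(\bm{M}+\bm{K})\big]\hat{\bm{u}}_m^T$, a consistent high-order approximation to $\int_{\bm{\Omega}_m}u_m^h\,d\bm{\Omega}$. Carefully tracking which integral is conserved---the weight-adjusted one exactly, and the physical one up to the order of the projection---is the bookkeeping the proof must handle, while the structural skew-symmetry and interface-telescoping argument supplies the rest.
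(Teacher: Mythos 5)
Your proposal is correct and follows essentially the same route as the paper's proof: left-multiply the scheme by the coefficient vector $\hat{\bm{1}}$ of the constant, kill the volume/hybrid term by the skew-symmetry of $(\Tilde{\bm{Q}}-\Tilde{\bm{Q}}^T)\odot\bm{F}_m^r$ (the paper cites Chan's Theorem 3 for this where you spell out the symmetric-times-skew-symmetric cancellation), and leave the surface flux as the only surviving term. Your additional observations---that $\hat{\bm{1}}\bm{K}_m=\bm{0}$ so the conserved quantity is the plain integral of $u_m^h$ rather than the modified-mass quantity, the explicit interface telescoping, and the bookkeeping for the weight-adjusted inverse---go beyond what the paper records, which simply states the balance $\hat{\bm{1}}(\bm{M}_m+\bm{K}_m)\frac{d}{dt}\hat{\bm{u}}_m(t)^T=-\sum_{f,k}W_{fk}\hat{\bm{n}}^r\cdot\bm{f}_m^{*,r}$ and stops there.
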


\begin{proof}
    For each equation of state we left multiply Eq.~(\ref{eq: NSFR}) by $\hat{\bm{1}}$, such that $\bm{\chi}(\bm{\xi}_v^r)\hat{\bm{1}}^T = \bm{1}^T $,

    \begin{equation}
        \hat{\bm{1}} \left(\bm{M}_m +\bm{K}_m\right)\frac{d}{d t} \hat{\bm{u}}_m(t)^T
     + \bm{1}
     \left[ \left(\Tilde{\bm{Q}}-\Tilde{\bm{Q}}^T\right) \odot \bm{F}_{m}^r \right]\bm{1}^T
     + \sum_{f=1}^{N_f}\sum_{k=1}^{N_{fp}}
     W_{fk} \hat{\bm{n}}^r \cdot
    \bm{f}_m^{*,r}.
    \end{equation}

    Alike Chan~\cite[Thm. 3]{chan2019skew}, $\left(\Tilde{\bm{Q}}-\Tilde{\bm{Q}}^T\right) \odot \bm{F}_{m}^r$ is skew-symmetric, so the volume terms vanish. Therefore, 

    \begin{equation}
        \hat{\bm{1}} \left(\bm{M}_m +\bm{K}_m\right)\frac{d}{d t} \hat{\bm{u}}_m(t)^T = -\sum_{f=1}^{N_f}\sum_{k=1}^{N_{fp}}
     W_{fk} \hat{\bm{n}}^r \cdot
    \bm{f}_m^{*,r},
    \end{equation}
    \noindent and the scheme is globally conservative.
\end{proof}
\subsection{Nonlinear Stability}\label{sec: nonlinear stability}

{\color{black}The last theorem of this section pertains to nonlinear stability. When a scheme is provably discretely nonlinearly stable, then, based from the work of Lyapunov, the approximate solution in bounded within a norm to the true solution. 
Hence, the discrete solution {\color{black}remains stable} which implies guaranteed robustness. It is important to note that stability does not imply convergence.}

\begin{thm}
    The NSFR discretization in Eq.~(\ref{eq: NSFR}) with the weight-adjusted low-storage mass matrix inverse from Eq.~(\ref{eq: weight-adj ESFR mass}) is discretely entropy conserving if the two-point flux satisfies the Tadmor shuffle condition. Thus, the NSFR discretization in Eq.~(\ref{eq: NSFR}) is nonlinearly stable.
\end{thm}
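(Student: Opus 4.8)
The plan is to follow the discrete entropy-stability machinery of Fisher and Carpenter~\cite{fisher2012high} and Chan~\cite{chan2018discretely,chan2019skew}, but carried out in the modified-norm (ESFR) setting so that the correction matrix $\bm{K}_m$ is tracked throughout. Concretely, I would test Eq.~(\ref{eq: NSFR}) against the modal coefficients $\hat{\bm{v}}_m$ of the entropy-projected variables, i.e.\ left-multiply each equation of state by $\hat{\bm{v}}_m$ and sum over the states. The left-hand side then separates into three pieces: the transient term carrying $\bm{M}_m+\bm{K}_m$, the hybridized Hadamard volume/surface term, and the surface numerical-flux term. The goal is to show these assemble into the discrete analogue of $\int_{\bm{\Omega}_m}\tfrac{\partial U}{\partial t}\,d\bm{\Omega}_m+\int_{\bm{\Gamma}_m}(\bm{v}\bm{f}^{T}-\bm{\psi})\cdot\hat{\bm{n}}\,d\bm{\Gamma}_m=0$.

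For the transient term, the key is that the entropy projection $\hat{\bm{v}}^T=\bm{\Pi}(\bm{v}(\cdot))$ is taken in the \emph{modified} norm, so that $\hat{\bm{v}}_m(\bm{M}_m+\bm{K}_m)=\bm{v}(\Tilde{\bm{u}}_m(\bm{\xi}_v^r))^{T}\bm{W}\bm{J}_m\bm{\chi}(\bm{\xi}_v^r)$ by the defining property of the projection. Contracting with $\tfrac{d}{dt}\hat{\bm{u}}_m^{T}$ and invoking the chain rule $\bm{v}=U'(\bm{u})$ collapses this to the $\bm{W}\bm{J}_m$-weighted quadrature of $\tfrac{d}{dt}U(\bm{u}_m)$, i.e.\ the rate of the discrete total entropy $\tfrac{d}{dt}\int_{\bm{\Omega}_m}U\,d\bm{\Omega}_m$. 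The genuinely new point relative to standard DG is that the ESFR correction $\bm{K}_m$ of Eq.~(\ref{eq:Km}) must be absorbed consistently into this projection and hence into the discrete entropy functional; I would verify that the symmetry and positive semidefiniteness of $\bm{K}_m$ guarantee this absorption preserves convexity of the discrete entropy and does not spoil the contraction.

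For the volume/surface Hadamard term I would exploit the skew-symmetry of $\Tilde{\bm{Q}}-\Tilde{\bm{Q}}^{T}$, the symmetry of the two-point flux matrix $\bm{F}_m^r$, and the Tadmor shuffle condition Eq.~(\ref{eq: Tadmor shuffle quad node}). Because the flux is evaluated at the entropy-projected states, the nodal entropy variables coincide with $\bm{\chi}(\bm{\xi}^r)\hat{\bm{v}}_m^T$, so the jumps $\bm{v}_i-\bm{v}_j$ produced by expanding the test function at the nodes are exactly those appearing in the shuffle condition. The standard manipulation rewrites $\hat{\bm{v}}_m[\,(\Tilde{\bm{Q}}-\Tilde{\bm{Q}}^{T})\odot\bm{F}_m^r\,]\bm{1}^{T}$ as $\tfrac12\sum_{i,j}(\bm{v}_i-\bm{v}_j)(\Tilde{\bm{Q}}-\Tilde{\bm{Q}}^{T})_{ij}\odot(\bm{F}_m^r)_{ij}$, and Eq.~(\ref{eq: Tadmor shuffle quad node}) converts each $\bm{v}$-jump into a jump of the entropy potential $\bm{\psi}$. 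The result is linear in $\bm{\psi}$ weighted by the skew operator, so it telescopes and, using the SBP-like property Eq.~(\ref{eq: SBP Qtilde}), collapses to a pure surface remainder of the form $\sum_{f,k}W_{fk}\hat{\bm{n}}^r\cdot\bm{\psi}(\bm{\xi}_{fk}^r)\bm{C}_m$.

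Finally I would combine this surface remainder with the surface numerical-flux term and invoke the entropy-potential/flux relation Eq.~(\ref{eq: entropy pot with flux relationship}), $\psi^k=\bm{v}{\bm{f}^k}^{T}-F^k$, so that the two contributions assemble into the discrete $\int_{\bm{\Gamma}_m}(\bm{v}\bm{f}^{T}-\bm{\psi})\cdot\hat{\bm{n}}\,d\bm{\Gamma}_m$ evaluated with the numerical surface flux. Summing over all elements and using an entropy-conservative interface flux (satisfying the shuffle condition across faces) cancels all interior interface contributions, yielding $\tfrac{d}{dt}\int_{\bm{\Omega}^h}U\,d\bm{\Omega}=0$ for periodic or entropy-consistent boundaries, which is the claimed entropy conservation; an entropy-dissipative interface flux replaces the equality by an inequality. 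The resulting a~priori bound on the discrete entropy bounds $\hat{\bm{u}}_m$ in the modified-norm entropy functional and, via the Lyapunov argument recalled before the theorem, delivers nonlinear stability. I expect the transient-term step to be the main obstacle: rigorously confirming that the modified-norm entropy projection makes $\bm{K}_m$ collapse into a bona fide convex discrete entropy functional, since that is precisely where the ESFR correction could, in principle, break the telescoping that closes the argument for ordinary DG.
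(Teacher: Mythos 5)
Your proposal follows the same backbone as the paper's proof: contract Eq.~(\ref{eq: NSFR}) with the modal coefficients of the projected entropy variables, expand the hybridized Hadamard term in flux-differencing form so that the jumps $\Tilde{v}_{n,i}-\Tilde{v}_{n,j}$ of the \emph{projected} entropy variables multiply the two-point flux (the paper's Eq.~(\ref{eq: surf vol entropy hybrid})), invoke the Tadmor shuffle condition~(\ref{eq: Tadmor shuffle quad node}) at the entropy-projected states together with the SBP-like property~(\ref{eq: SBP Qtilde}) to telescope the volume contribution into a surface entropy-potential term, and close with an entropy-conservative interface flux via $\psi^k=\bm{v}{\bm{f}^k}^T-F^k$. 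Your observation that none of this works unless the two-point flux is built from the entropy-projected conservative variables is exactly the paper's emphasis. The one place you genuinely diverge is the transient term. The paper stops at the identity $\sum_n\hat{\bm{v}}_n(\bm{M}_m+\bm{K}_m)\frac{d}{dt}\hat{\bm{u}}_{m,n}^T=\sum_n\sum_{f,k}\left(\bm{\psi}_n-v_n\bm{f}_{m,n}^{*,r}\right)\cdot\hat{\bm{n}}^r$ and \emph{defines} entropy conservation ``within the $(\bm{M}_m+\bm{K}_m)$-norm'' at that level, never claiming the left-hand side is the rate of a quadrature approximation of $\int U$. You attempt that stronger identification, which requires the entropy projection to be the modified-norm one; note, however, that with $\hat{\bm{v}}=\bm{v}(\bm{u}_m(\bm{\xi}_v^r))\bm{W}\bm{J}_m\bm{\chi}(\bm{\xi}_v^r)(\bm{M}_m+\bm{K}_m)^{-1}$ the correct identity is $\hat{\bm{v}}(\bm{M}_m+\bm{K}_m)=\bm{v}(\bm{u}_m(\bm{\xi}_v^r))\bm{W}\bm{J}_m\bm{\chi}(\bm{\xi}_v^r)$ with the entropy variables of the \emph{unprojected} interpolated solution $\bm{u}_m$, not of $\Tilde{\bm{u}}_m$ as you wrote --- and it is precisely this version that lets the pointwise chain rule produce $\frac{d}{dt}U(\bm{u}_m(\bm{\xi}_i))$. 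If instead the ordinary $L^2$ projection is used (as the paper's notation $\bm{\Pi}$ suggests), an uncancelled $\hat{\bm{v}}\bm{K}_m\frac{d}{dt}\hat{\bm{u}}^T$ remains and the identification with the entropy rate fails; you correctly flag this as the crux. In short, your telescoping and surface-balance argument is the paper's argument; your transient-term refinement is an optional strengthening that the paper's stated conclusion does not require, and the positive semidefiniteness of $\bm{K}_m$ you invoke plays no role in the conservation identity itself (it matters only if one wants a norm bound from the dissipative variant).
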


\begin{proof}
    We left multiply Eq.~(\ref{eq: NSFR}) by the modal coefficients of the entropy variables evaluated at the quadrature nodes, and sum over all of the states,

    \begin{equation}
       \sum_{n=1}^{n_\text{state}}\left[ \hat{\bm{v}}_n \left(\bm{M}_m+\bm{K}_m \right)\frac{d}{d t} \hat{\bm{u}}_{m,n}(t)^T
        +\hat{\bm{v}}_n\left[ \bm{\chi}(\bm{\xi}_v^r)^T\:  \bm{\chi}(\bm{\xi}_f^r)^T\right]
     \left[ \left(\Tilde{\bm{Q}}-\Tilde{\bm{Q}}^T\right) \odot \bm{F}_{m,n}^r \right]\bm{1}^T
     +\hat{\bm{v}}_n \sum_{f=1}^{N_f}\sum_{k=1}^{N_{fp}}
    \bm{\chi}(\bm{\xi}_{fk}^r)^T W_{fk} \hat{\bm{n}}^r \cdot
    \bm{f}_{m,n}^{*,r}\right].\label{eq: entropy var discr}
    \end{equation}

\noindent It is important to note that Eq.~(\ref{eq: entropy var discr}) is not the same as only applying the $\left(\bm{M}_m+\bm{K}_m\right)^{-1}$ on the surface as commonly used in the FR literature~\cite{vincent_new_2011,castonguay_energy_2013,huynh_reconstruction_2009,allaneau_connections_2011,zwanenburg_equivalence_2016,ranocha2016summation,ranocha2017extended,abe2018stable}. To demonstrate this, consider comparing the stability condition for NSFR,

\begin{equation}
    \begin{split}
     &   \text{NSFR Stability: }\\
      &  \sum_{n=1}^{n_\text{state}}\Big[ \hat{\bm{v}}_n \left(\bm{M}_m+\bm{K}_m \right)\frac{d}{d t} \hat{\bm{u}}_{m,n}(t)^T
        +
       \hat{\bm{v}}_n \left(\bm{M}_m+\bm{K}_m \right)
        \left(\bm{M}_m+\bm{K}_m \right)^{-1}\left[ \bm{\chi}(\bm{\xi}_v^r)^T\:  \bm{\chi}(\bm{\xi}_f^r)^T\right]
     \left[ \left(\Tilde{\bm{Q}}-\Tilde{\bm{Q}}^T\right) \odot \bm{F}_{m,n}^r \right]\bm{1}^T\\
 &    \hspace{0.7cm}+\hat{\bm{v}}_n\left(\bm{M}_m+\bm{K}_m \right)
        \left(\bm{M}_m+\bm{K}_m \right)^{-1} \sum_{f=1}^{N_f}\sum_{k=1}^{N_{fp}}
    \bm{\chi}(\bm{\xi}_{fk}^r)^T W_{fk} \hat{\bm{n}}^r \cdot
    \bm{f}_{m,n}^{*,r}\Big],
    \end{split}\label{eq: NSFR stability}
    \end{equation}

\noindent versus a classical ESFR scheme in a flux differencing form,

\begin{equation}
    \begin{split}
  &  \text{ESFR Flux Differencing Stability: }\\
       & \sum_{n=1}^{n_\text{state}}\Big[ \hat{\bm{v}}_n \left(\bm{M}_m+\bm{K}_m \right)\frac{d}{d t} \hat{\bm{u}}_{m,n}(t)^T
        +
        \hat{\bm{v}}_n\left(\bm{M}_m+\bm{K}_m \right)
        \left(\bm{M}_m \right)^{-1}\left[ \bm{\chi}(\bm{\xi}_v^r)^T\:  \bm{\chi}(\bm{\xi}_f^r)^T\right]
     \left[ \left(\Tilde{\bm{Q}}-\Tilde{\bm{Q}}^T\right) \odot \bm{F}_{m,n}^r \right]\bm{1}^T\\
   &  \hspace{0.7cm}+\hat{\bm{v}}_n\left(\bm{M}_m+\bm{K}_m \right)
        \left(\bm{M}_m+\bm{K}_m \right)^{-1} \sum_{f=1}^{N_f}\sum_{k=1}^{N_{fp}}
    \bm{\chi}(\bm{\xi}_{fk}^r)^T W_{fk} \hat{\bm{n}}^r \cdot
    \bm{f}_{m,n}^{*,r}\Big].
    \end{split}\label{eq: ESFR stability}
\end{equation}

{\color{black}By considering FR as DG with a modified mass matrix $\bm{M}_m+\bm{K}_m$, we directly incorporate the influence of the FR correction functions through $\bm{K}_m$ on the non-conservative volume terms. This allows Eq.~(\ref{eq: entropy var discr}) to implicitly have $\left(\bm{M}_m+\bm{K}_m\right)\left(\bm{M}_m+\bm{K}_m\right)^{-1}$ cancel on the volume-surface hybrid term as seen in Eq.~(\ref{eq: NSFR stability}). That was not observed in Ranocha~\textit{et al}.~\cite{ranocha2016summation,ranocha2017extended} nor Abe~\textit{et al}.~\cite{abe2018stable} since the ESFR flux differencing volume term does not have the $\bm{K}_m$ operator vanish, which renders it unstable, as seen by $\bm{K}_m\bm{M}_m^{-1}$ in Eq.~(\ref{eq: ESFR stability}).}

The next step highlights the importance of using the entropy-projected variables. As in, Chan~\cite{chan2018discretely}, we will denote $\Tilde{\bm{v}}=\bm{\chi}\left(\bm{\xi}^r\right)\hat{\bm{v}}^T$, then the volume-surface hybrid term becomes,

\begin{equation}
  \sum_{n=1}^{n_\text{state}}  \hat{\bm{v}}_n\left[ \bm{\chi}(\bm{\xi}_v^r)^T\:  \bm{\chi}(\bm{\xi}_f^r)^T\right]
     \left[ \left(\Tilde{\bm{Q}}-\Tilde{\bm{Q}}^T\right) \odot \bm{F}_{m,n}^r \right]\bm{1}^T
     =  \sum_{n=1}^{n_\text{state}} \sum_{i,j=1}^{N_{v}+N_f N_{fp}}
     \left(\Tilde{\bm{Q}}\right)_{ij} \left(\Tilde{{v}}_{n,i}-\Tilde{{v}}_{n,j}\right) \bm{f}_{s,n}^r\left(\Tilde{\bm{u}}_m(\bm{\xi}_{i}^r),\Tilde{\bm{u}}_m(\bm{\xi}_{j}^r)\right).\label{eq: surf vol entropy hybrid}
\end{equation}

{\color{black}Chan~\cite{chan2018discretely} arrived at Eq.~(\ref{eq: surf vol entropy hybrid}) by expanding the volume hybrid term in a flux differencing form that sums over all quadrature nodes~\cite[Eq. (71)]{chan2018discretely}. Then the Tadmor shuffle condition was substituted for the change of entropy variables and two-point flux to recover the entropy potential on the surface through integration-by-parts~\cite[Eq. (72) and (73)]{chan2018discretely}. The substitution for the Tadmor shuffle condition by Chan~\cite[Eq. (72) and (73)]{chan2018discretely} was only possible because the two-point flux was constructed by the entropy projected variables. Since the Tadmor shuffle condition is a function of the entropy variables in Eq.~(\ref{eq: Tadmor shuffle quad node}), the two-point flux has to satisfy $\bm{f}_{s,n} = \frac{\bm{\psi}_{n,i}-\bm{\psi}_{n,j}}{\Tilde{v}_{n,i} - \Tilde{v}_{n,j}}$, and thus the conservative variables used to construct $\bm{f}_s$ need to be mapped from $\Tilde{\bm{v}}$. Making the substitution renders, 
}



    \begin{equation}
          \sum_{n=1}^{n_\text{state}} \hat{\bm{v}}_n \left(\bm{M}_m+\bm{K}_m \right)\frac{d}{d t} \hat{\bm{u}}_{m,n}(t)^T
         = \sum_{n=1}^{n_\text{state}} \sum_{f=1}^{N_f}\sum_{k=1}^{N_{fp}}\left( 
         \bm{\psi}_n\left(\bm{\xi}_{fk}^r\right) - {v}_n\left(\bm{\xi}_{fk}^r\right)\bm{f}_{m,n}^{*,r}
         \right)\cdot \hat{\bm{n}}^r .
    \end{equation}

\noindent
  Using appropriate boundary conditions, and the choice of $\bm{f}_{m,n}^{*,r}\left(\Tilde{\bm{u}}_\text{left}(\bm{\xi}_{fk}^r),\Tilde{\bm{u}}_\text{right}(\bm{\xi}_{fk}^r)\right)$ as an entropy conserving flux based on entropy projected variables from the left and right of the face, completes the proof for discrete entropy conservation within the $(\bm{M}_m+\bm{K}_m)$-norm.
\end{proof}

To incorporate entropy dissipation, as extensively detailed by Chandrashekar~\cite[Sec. 5 and 6]{chandrashekar2013kinetic}, careful treatment needs to be considered. By directly considering the jump in the conservative variables with a positive scalar does not provably guarantee entropy dissipation because the jump in energy is not entropy dissipative~\cite[Sec. 5.2]{chandrashekar2013kinetic}. Thus, we make use of the Roe dissipation~\cite{roe1981approximate} which can provably be recast with the jump of the entropy variables to provably add entropy dissipation~\cite{ismail2009affordable,chandrashekar2013kinetic}. 

An important consequence of the need for projected entropy variables to satisfy the Tadmor shuffle condition in the volume is that a scheme cannot be discretely kinetic energy preserving if the surface quadrature is not a subset of the volume quadrature. For example, if the scheme is integrated on Gauss-Legendre quadrature nodes it cannot be exactly discretely kinetic energy preserving, whereas if it is integrated on Gauss-Legendre-Lobatto volume quadrature it can be kinetic energy preserving.

\begin{lemma}
    A high-order discretization cannot be exactly discretely kinetic energy preserving if the surface quadrature nodes are not a subset of the volume quadrature nodes.\label{lem: kin energy}
\end{lemma}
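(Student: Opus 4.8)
The plan is to run the proof of the preceding entropy-conservation theorem essentially verbatim, with the kinetic energy $K=\frac12\rho(u^2+v^2+w^2)$ replacing the entropy $U$, and then to isolate the single step that cannot be reproduced. First I would introduce the \emph{kinetic energy variables} $\bm{w}_K = \partial K/\partial\bm{W} = \bigl(-\tfrac12(u^2+v^2+w^2),\,u,\,v,\,w,\,0\bigr)$ and record the decisive structural fact already announced in Sec.~\ref{sec: Introduction}: the map $\bm{W}\mapsto\bm{w}_K$ is \emph{not} invertible. Its energy component vanishes identically (so $\rho e$, equivalently $p$, is invisible to $\bm{w}_K$) and its first component is fixed by the middle three, so $\bm{w}_K$ carries only the three velocity degrees of freedom. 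Consequently there is no kinetic-energy analogue of the entropy-projected conservative state $\Tilde{\bm{u}}$: the object $\bm{W}\bigl(\bm{\Pi}(\bm{w}_K)\bigr)$ cannot be formed.

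Next I would contract Eq.~(\ref{eq: NSFR}) against the projected kinetic energy variables $\hat{\bm{w}}_K$ in place of $\hat{\bm{v}}$. The time term yields the rate of change of a discrete kinetic energy exactly as in the entropy case, and the volume--surface hybrid term of Eq.~(\ref{eq: surf vol entropy hybrid}) again collapses to a flux-differencing double sum. For this sum to telescope to a pure surface term---the discrete statement of kinetic energy conservation---the jumps $\bigl(\Tilde{w}_{K,i}-\Tilde{w}_{K,j}\bigr)$ produced by the test function must be exactly the jumps against which the two-point flux $\bm{f}_s$ satisfies the kinetic-energy shuffle (Chandrashekar's flux does). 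In the entropy proof this matching was automatic because $\bm{f}_s$ was built from $\Tilde{\bm{u}}=\bm{u}(\Tilde{\bm{v}})$, so the jump seen by the flux and the jump seen by the test function were the \emph{same} projected polynomial sampled at the nodes. The content of the lemma is that the kinetic-energy counterpart of this identification would require the flux to be built from $\Tilde{\bm{u}}_K=\bm{W}(\Tilde{\bm{w}}_K)$, which does not exist by the non-invertibility above.

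I would then show that this obstruction is void \emph{precisely} when the surface quadrature is a subset of the volume quadrature. In that collocated case the hybridized operator $\Tilde{\bm{Q}}-\Tilde{\bm{Q}}^T$ only ever samples the flux and the test variables at volume nodes (the surface nodes being among them), so the double sum telescopes at the nodal level using the diagonal-norm SBP structure of Eq.~(\ref{eq: SBP Qtilde}); no projected-and-inverted state is ever invoked, and the pointwise kinetic-energy property of $\bm{f}_s$ suffices to close the balance. When the surface quadrature is not a subset, the upper-right and lower-left blocks of Eq.~(\ref{eq: skew-symm qtilde oper}) force genuine surface evaluations $\Tilde{w}_K(\bm{\xi}_{f}^r)$ that are not volume-node values; reproducing them consistently with the volume telescoping is exactly the projection-then-inversion step that is undefined for $\bm{w}_K$. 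Interpolating the velocity to the surface and squaring it does not agree with the value the telescoping assigns, and this residual is a nonzero volume production of kinetic energy.

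The hardest part will be the final step: proving that \emph{no} choice of discrete test variables or surface evaluation can remove this residual, not merely that the natural choice fails. I would argue by contradiction. Assuming exact discrete conservation of $K$, the telescoped identity forces the surface kinetic-energy potential produced from the volume data to coincide with the potential evaluated from the surface-interpolated state at every face node; since $\bm{w}_K$ fixes only the velocity and not $(\rho,\rho e)$, this coincidence would pin down a preimage of a projected $\bm{w}_K$ under $\bm{W}\mapsto\bm{w}_K$, contradicting non-invertibility---unless the surface nodes already lie in the volume set, so that no projection is needed. This reduces the lemma to the established non-existence of the inverse map and completes the argument.
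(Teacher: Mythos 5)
Your proposal is correct and takes essentially the same route as the paper: the decisive fact in both is that the fifth component of $\bm{v}_{K.E.}$ vanishes, so the map from conservative variables to kinetic energy variables is not invertible, the kinetic-energy analogue of the entropy-projected state at surface nodes cannot be formed, and the obstruction disappears exactly when the surface quadrature is a subset of the volume quadrature because the interpolation/projection operators reduce to the identity and no inverse map is ever invoked. Your closing contradiction argument goes beyond what the paper attempts (its proof stops at the non-existence of the inverse map), but this does not change the fact that the core argument is the same.
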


\begin{proof}

    We begin by considering the kinetic energy as $K.E. =\frac{1}{2}\rho\left(u^2+v^2+w^2\right)$. Similarly to entropy, the kinetic energy variables are,

    \begin{equation}
       \bm{v}_{K.E.} =  \frac{\partial K.E.}{\partial \bm{u}} = 
       \left[ -\frac{u^2+v^2+w^2}{2},\:u,\:v,\:w,\:0\right],\label{eq: kin energy var}
    \end{equation}
\noindent and the kinetic energy potential is $\bm{\psi}^k = 0,\:\forall k\in [1,d]$. Since the fifth value of $\bm{v}_{K.E.}$ is $0$, an inverse mapping from $\bm{v}_{K.E.}\to\bm{u}$ does not exist. Considering we have a two-point flux that satisfies kinetic energy preservation, we need the surface numerical flux evaluated with the entropy projected variables on the face $f$ facet cubature node $k$,
    $\Tilde{\bm{u}}\left(\bm{\xi}_{fk}^r\right) =\bm{u}\left(\bm{\chi}\left(\bm{\xi}_{fk}^r\right)\hat{\bm{v}}^T_{K.E.}\right)$. Although we can evaluate the interpolation of the kinetic energy variable to the face, we cannot perform the inverse mapping to extract the conservative variables. If the surface quadrature nodes are a subset of the volume quadrature nodes, then the interpolation and projection operators become an identity, 
    and the inverse mapping is never needed to evaluate $\Tilde{\bm{u}}\left(\bm{\xi}_{fk}^r\right)$, allowing for discrete kinetic energy conservation only for the case when the surface quadrature nodes are a subset of the volume quadrature nodes (for example Gauss-Lobatto-Legendre).

\end{proof}

We numerically verify Lemma~\ref{lem: kin energy} in Sec.~\ref{sec: TGV}, where kinetic energy is conserved to machine precision when integrated on Gauss-Lobatto-Legendre quadrature nodes, and it is dissipated when integrated on Gauss-Legendre quadrature nodes.
\section{Results}\label{sec: results}

In this section, we use the open-source Parallel High-order Library for PDEs (\texttt{PHiLiP})~\cite{shi2021full}. We first perform a grid study to verify the orders of convergence with the weight-adjusted framework. Then, we numerically verify the nonlinear stability properties from Section~\ref{sec: conserved prop}. 

For all tests, we use a 4-stage, fourth-order Runge-Kutta timestepping scheme with an adaptive timestep based on the maximum wavespeed in the domain with a CFL$=0.1$. 
We let the adiabatic constant $\gamma=1.4$ for all tests. For the NSFR-EC schemes, \enquote{EC} refers to entropy conserving, we use Chandrashekar's~\cite{chandrashekar2013kinetic} entropy conserving flux with Ranocha's~\cite{ranocha2022preventing} pressure fix for kinetic energy preservation 
for the two-point flux; while \enquote{GL} refers to integrating on Gauss-Legendre nodes and \enquote{LGL} refers to integrating on Lobatto-Gauss-Legendre quadrature. All schemes have the conservative solution modal coefficients on LGL nodes. We use the weight-adjusted mass matrix inverse for all schemes except the conservative DG. For the curvilinear case, the metric terms are constructed by $p+1$ order polynomials detailed in Cicchino~\textit{et al}.~\cite{cicchino2022provably}. Lastly, when the scheme is \enquote{overintegrated}, the number of quadrature nodes is $(p+1)+\text{overintegration}$.

For the curvilinear grid, it is important to use a non-symmetric mapping in every direction to ensure that none of the nonlinear metric terms implicitly factor off, as discussed in Cicchino~\textit{et al}.~\cite{cicchino2022provably}. The three-dimensional mapping is

\begin{equation}
   \begin{split}
   & [x,y,z]\in[x_L,x_R]^3,\:\:[a,b,c]\in[x_L,x_R]^3,\:\:l=\frac{x_R-x_L}{2\pi},\\
     &   x= a + \beta\sin{\left(\frac{a}{l}\right)}\sin{\left(\frac{b}{l}\right)}\sin{\left(\frac{2c}{l}\right)},\\
     &   y= b + \beta\sin{\left(\frac{4a}{l}\right)}\sin{\left(\frac{b}{l}\right)}\sin{\left(\frac{3c}{l}\right)},\\
      &  z = c +\beta\sin{\left(\frac{2a}{l}\right)}\sin{\left(\frac{5b}{l}\right)}\sin{\left(\frac{c}{l}\right)}.
    \end{split}\label{eq: warped grid}
\end{equation}

\noindent A cross-section for Eq.~(\ref{eq: warped grid}) is presented in Figure~\ref{fig: curv grid}.

\begin{figure}[H]
    \centering
    \includegraphics[width=0.4\textwidth]{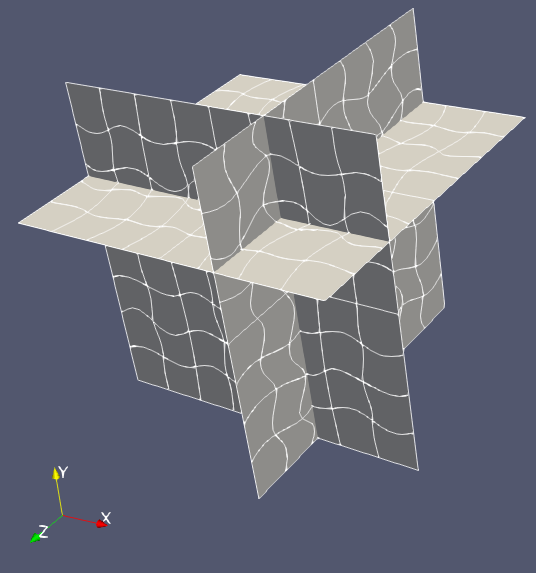}    
    \caption{3D Nonsymmetrically Warped Curvilinear Grid Cross-Section.}\label{fig: curv grid}
\end{figure}

\subsection{Manufactured Solution Convergence}\label{sec: manfac sol}

In this subsection we consider the manufactured solution for Euler's equations from Gassner \textit{et al}.~\cite{gassner2016split},

\begin{equation}
    \begin{split}
        &\rho = 2+\frac{1}{10}\sin{\left(\pi\left(x+y+z-2t\right)\right)},\\
        & u=1,\\
        &v=1,\\
       & w=1,\\
        & \rho e = \rho ^2,
    \end{split}
\end{equation}

\noindent with the unsteady source term,

\begin{equation}
    \begin{split}
    \bm{q} =   \begin{bmatrix}& c_1\cos{\left(\pi\left(x+y+z - 2t\right)\right)}\\
        & c_2\cos{\left(\pi\left(x+y+z - 2t\right)\right)}
                        + c_3 \sin{\left(2\pi\left(x+y+z - 2t\right)\right)}\\
        &  c_2\cos{\left(\pi\left(x+y+z - 2t\right)\right)}
                        + c_3 \sin{\left(2\pi\left(x+y+z - 2t\right)\right)}\\
        & c_2\cos{\left(\pi\left(x+y+z - 2t\right)\right)}
                        + c_3 \sin{\left(2\pi\left(x+y+z - 2t\right)\right)}\\
        &  c_4\cos{\left(\pi\left(x+y+z - 2t\right)\right)}
                        + c_5 \sin{\left(2\pi\left(x+y+z - 2t\right)\right)}
                        \end{bmatrix},
    \end{split}
\end{equation}

\noindent with $c_1 = \frac{\pi}{10}$, $c_2 = -\frac{\pi}{5}+\frac{\pi}{20}\left(1+5\gamma\right)$, $c_3=\frac{\pi}{100}, \left(\gamma-1\right)$, $c_4 = \frac{\pi}{20}\left(-7+15\gamma\right)$, and $c_5=\frac{\pi}{100}\left(3\gamma-2\right)$. The domain is $[x,y,z]\in[-1,1]^3$, and we use $\beta=\frac{1}{50}$ and $l=2$ in the warping Eq.~(\ref{eq: warped grid}). The $\Delta x$ is taken as the average distance between two quadrature points, $\frac{l}{M(p+1)}$, where $M$ is the total number of elements. We simulate for one cycle to a final time of $t_f=2$s. For the surface numerical flux, we add Roe dissipation~\cite{roe1981approximate} to the entropy conserving baseline flux. We integrate on Gauss-Legendre quadrature nodes with an uncollocated Lagrange modal basis. We used the weight-adjusted mass inverse approximation from Eq.~(\ref{eq: weight-adj ESFR mass}) with an adaptive timestep of $\Delta t =0.2\Delta x$.

The L2-error is computed as,

\begin{equation}
    \text{L}2 -\text{error} = \sqrt{\sum_{m=1}^{M}{\int_{\bm{\Omega}_m}{\left(u_m-u\right)^2 d\bm{\Omega}_m}}}
    =\sqrt{\sum_{m=1}^{M}\left(\bm{u}_m^T-\bm{u}_{exact}^T\right)\bm{W}\bm{J}_m\left(\bm{u}_m-\bm{u}_{exact}\right)}.
\end{equation}

The orders are presented in Fig.~\ref{fig: OOA man sol} for both density and pressure for the NSFR entropy conserving (EC) scheme with $c_\text{DG}$ and $c_+$.

\begin{figure}[H]
    \centering
   \begin{subfigure}[t]{0.5\textwidth}
        \centering
        \includegraphics[angle=270,width=1.0\textwidth]{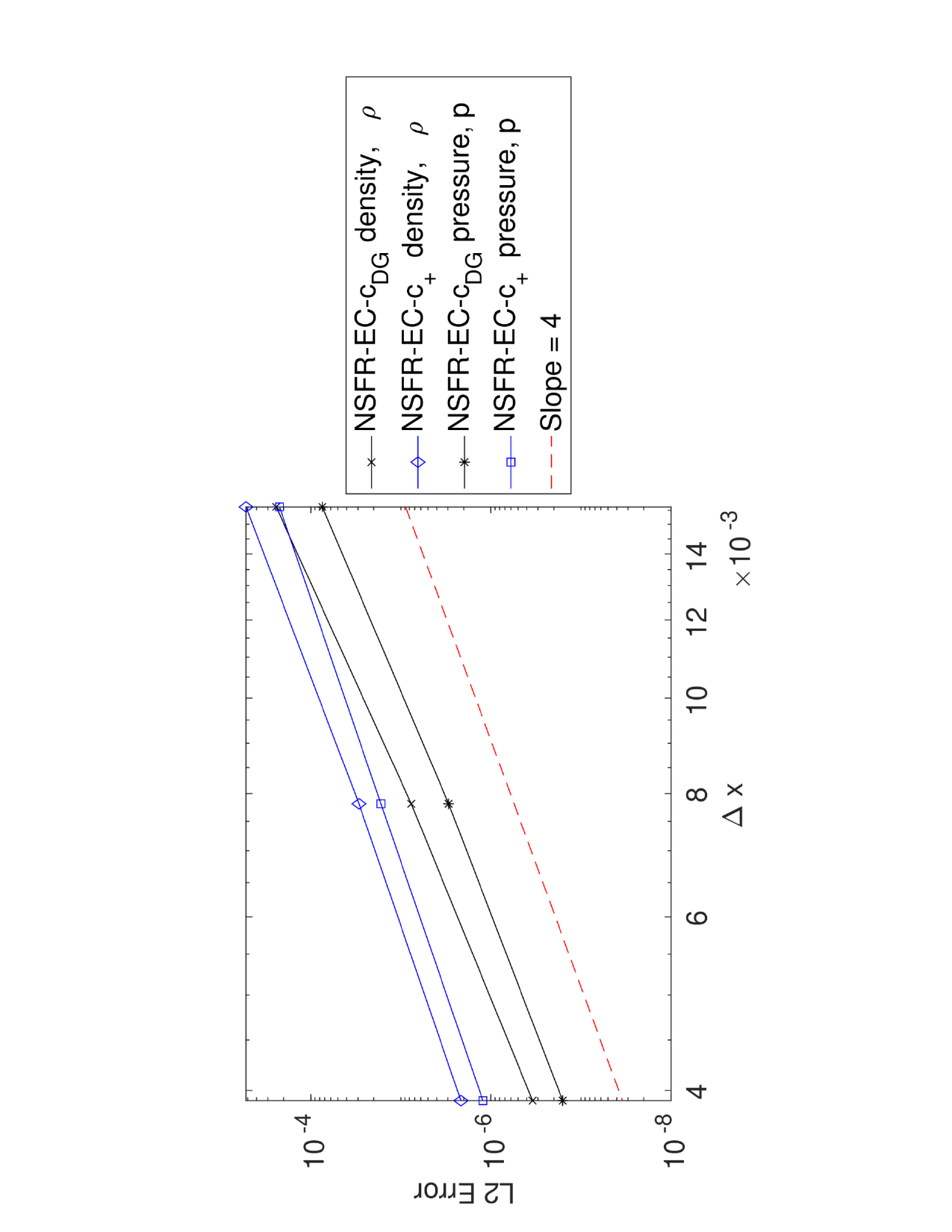}
    \caption{3D Manufactured Solution $p=3$ Orders of Convergence.}\label{fig: p3 ooa}
    \end{subfigure}%
    ~ 
   \begin{subfigure}[t]{0.5\textwidth}
        \centering
       \includegraphics[angle=270,width=1.0\textwidth]{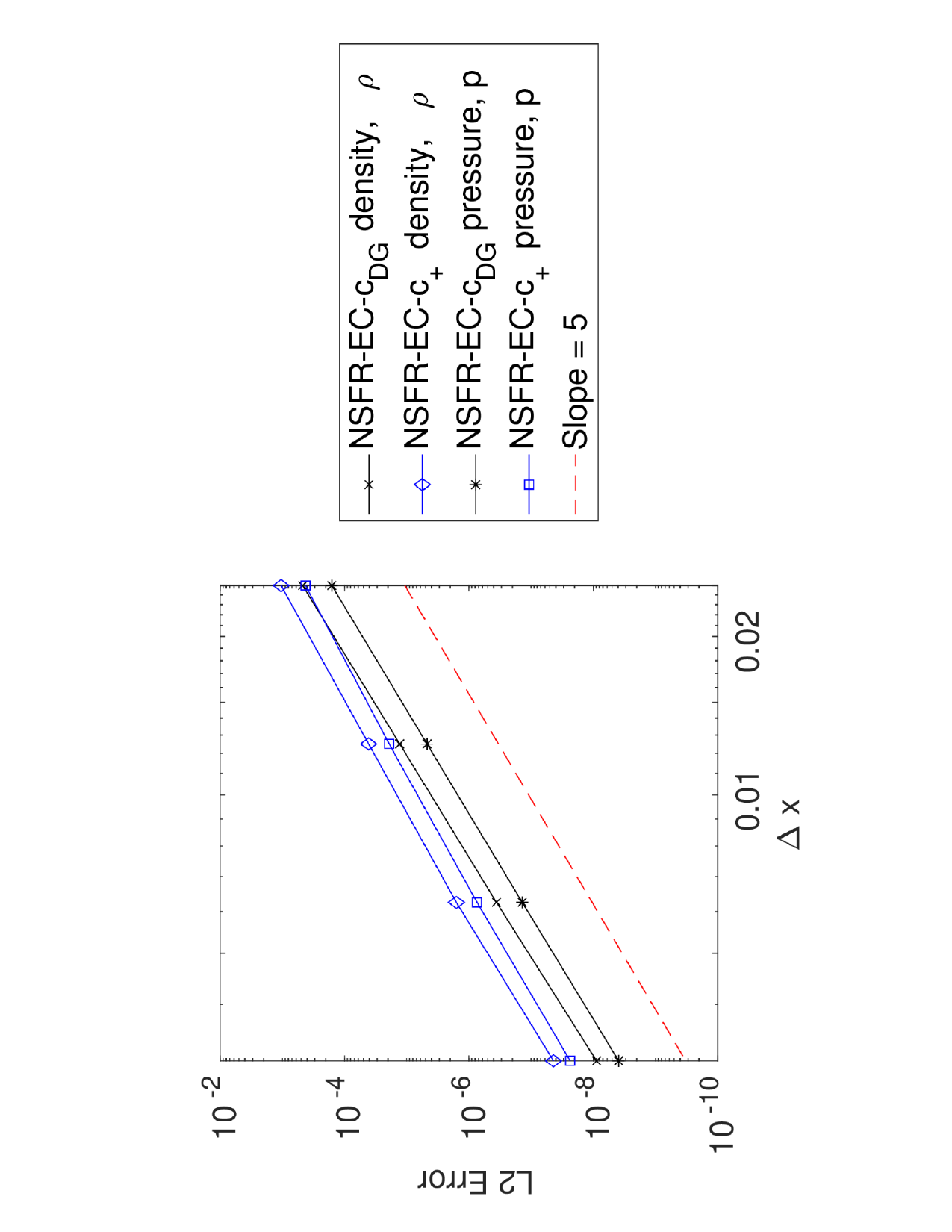}
    \caption{3D Manufactured Solution $p=4$ Orders of Convergence.}\label{fig: p4 ooa}
    \end{subfigure}
    \begin{subfigure}[t]{0.5\textwidth}
        \centering
       \includegraphics[angle=270,width=1.0\textwidth]{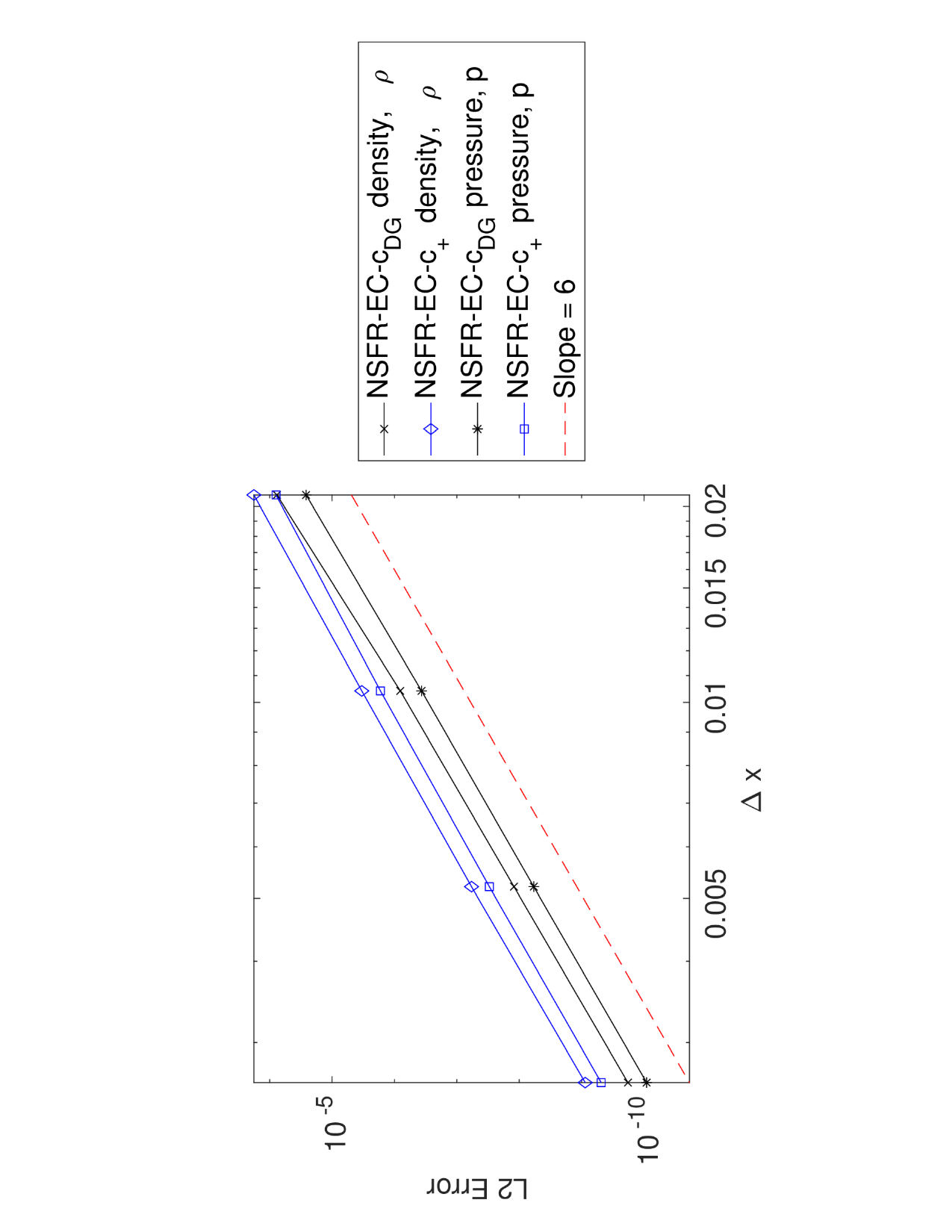}
    \caption{3D Manufactured Solution $p=5$ Orders of Convergence.}\label{fig: p5 ooa}
    \end{subfigure}
    \caption{3D Manufactured Solution Orders of Convergence}\label{fig: OOA man sol}
\end{figure}

From Fig.~\ref{fig: OOA man sol}, we observe that the NSFR discretization converges at $p+1$ for both $c_\text{DG}$ and $c_+$ variants in curvilinear coordinates. The error levels obtained with $c_+$ were slightly larger than those obtained with $c_\text{DG}$ for the same grid level and polynomial order as expected. The larger error levels are a trade-off for larger maximum timesteps offered by $c_+$.

\subsection{Maximum CFL}\label{sec: max cfl}

In this subsection we compare the maximum CFL numerically obtained for different NSFR schemes. We consider the manufactured solution from Sec.~\ref{sec: manfac sol} and run it for different NSFR correction parameter choices, different integration nodes, and different polynomial degrees. The manufactured solution is integrated until a final time $t_f=1.0$s for each combination of parameters with a CFL$=0.1$. Then, we increase the CFL by $0.01$ and rerun the test until the computed pressure error at $t_f=1.0$s exceeds seven significant digits as compared to the solution obtained with a CFL$=0.1$. GL implies that the scheme was integrated on Gauss-Legendre quadrature nodes, and LGL signifies that the scheme was integrated on Gauss-Legendre-Lobatto quadrature nodes. 

\begin{table}[H]
\subfloat[Max CFL for $p=3$, $4^3$ elements.]{
 \begin{tabular}{c c c}
{\bf Scheme} & {\bf Quadrature} &  {\bf Max CFL}\\ \hline 
NSFR $c_\text{DG}$& GL & 0.18\\ 
 & &\\\hline 
NSFR $c_+$ & GL &   0.21\\ 
  & &\\\hline 
  NSFR $c_\text{HU}$ & GL&  0.22\\ 
 & &\\\hline 
NSFR $c_\text{DG}$ &LGL & 0.16 \\ 
  & &\\\hline 
NSFR $c_\text{+}$ &LGL& 0.24\\ 
 & &\\\hline 
\end{tabular} 
 }
{
\subfloat[Max CFL for $p=4$, $4^3$ elements.]{
 \begin{tabular}{c c c}
{\bf Scheme} & {\bf Quadrature} &  {\bf Max CFL}\\ \hline 
NSFR $c_\text{DG}$& GL & 0.17\\ 
 & &\\\hline 
NSFR $c_+$ & GL  & 0.29\\ 
  & &\\\hline 
  NSFR $c_\text{HU}$ & GL& 0.24\\ 
 & &\\\hline 
NSFR $c_\text{DG}$ & LGL& 0.14\\ 
  & &\\\hline 
NSFR $c_\text{+}$ & LGL &0.28\\ 
 & &\\\hline 
\end{tabular} 
 }
 }
\quad
{
\subfloat[Max CFL for $p=5$, $4^3$ elements.]{
 \begin{tabular}{c c c}
{\bf Scheme} & {\bf Quadrature} &  {\bf Max CFL}\\ \hline 
NSFR $c_\text{DG}$& GL& 0.17\\ 
 & \\\hline 
NSFR $c_+$ & GL & 0.22\\ 
 & &\\\hline 
NSFR $c_\text{HU}$ & GL& 0.23\\ 
 & &\\\hline 
NSFR $c_\text{DG}$ & LGL & 0.16\\ 
 && \\\hline 
NSFR $c_\text{+}$ & LGL & 0.20\\ 
 & &\\\hline 
\end{tabular} 
 }
 }
 \caption{Max CFL for $p=4$ and $p=5$, $4^3$ elements.} \label{tab: curvilinear CFL}
 \end{table}

Table~\ref{tab: curvilinear CFL} shows an increase in CFL from $c_\text{DG}$ to $c_+$ as that found for linear advection by Vincent~\textit{et al}.~\cite{vincent_insights_2011}, but the difference in CFL values is not as large as the linear case from Vincent~\textit{et al}.~\cite{vincent_insights_2011}. In the NSFR implementation Eq.~(\ref{eq: NSFR}), this increase in CFL comes without additional runtime computational cost. Unlike the results from Gassner and Kopriva~\cite{gassner2011comparison}, we did not find an increase in CFL by using LGL nodes as compared to GL nodes, except only for $p=3$ and $c_+$. We believe this was due to the loss of integration strength with LGL nodes as compared to GL nodes. Gassner and Kopriva~\cite{gassner2011comparison} considered a linear flux on a linear grid, where LGL nodes would integrate the volume divergence of the flux exactly, whereas, for our cases, the flux is rational on a curvilinear mesh. For the nonlinear case, to replicate the filter obtained in Gassner and Kopriva~\cite{gassner2011comparison} we also ran a value of $c_\text{HU}$. When integrated on GL nodes with a value of $c_\text{HU}$ and a Lagrange basis with LGL solution nodes, the modified mass matrix is exactly the LGL collocated mass matrix~\cite{huynh_flux_2007,de_grazia_connections_2014,vincent_new_2011}. Thus, the nonlinear extension for the CFL increase from Gassner and Kopriva~\cite{gassner2011comparison} would be using NSFR with $c_\text{HU}$. 

For $p=3$ and $p=5$, $c_\text{HU}$ gave a slightly larger CFL than $c_+$. The value of $c_+$ used was numerically obtained from the von Neumann analysis for linear advection by Vincent and coauthors~\cite{vincent_insights_2011}. This case considers a rational flux, and due to the nonlinearities, the $c$ value that corresponds to the maximum timestep might be slightly lower than the $c_+$ value for one-dimensional linear advection. A full conclusion on the value of $c_+$ for nonlinear and rational fluxes cannot be drawn until a von Neumann analysis for nonlinear problems is performed for the cases. 

\subsection{Inviscid Taylor Green Vortex}\label{sec: TGV}

We consider the inviscid Taylor-Green vortex problem, initialized on the periodic box as,

\begin{equation}
    \begin{split}
       & \rho=1,\\
       & u=\sin{x}\cos{y}\cos{z},\\
       & v=-\cos{x}\sin{y}\cos{z},\\
       & w=0,\\
      &  p=\frac{100}{\gamma}+\frac{1}{16}\Big( \cos{2x}\cos{2z} +2\cos{2x}+2\cos{2y}+\cos{2y}\cos{2z}
        \Big),\\
       & \bm{x}^c\in[0,2\pi]^3,\: t\in[0,14].
    \end{split}
\end{equation}

We consider both a Cartesian mesh and the heavily warped, periodic grid defined by Eq.~(\ref{eq: warped grid}) with $\beta=\frac{1}{5}$. 
We use $4$ elements in each direction and verify that the discrete change in entropy, $\hat{\bm{v}}\left(\bm{M}_m+\bm{K}_m \right)\frac{d}{dt}\hat{\bm{u}}(t)^T$, is conserved on the order of $1\times 10^{-13}$ for $p=4,5$. These polynomial orders and mesh size correspond to $20^3$ and $24^3$ degrees of freedom to simulate under-resolved turbulence. 
This test showcases the strength of the entropy conserving framework because a conservative DG scheme, without the entropy stable framework, diverges. All schemes were globally conservative and free-stream preserving on the order of $1\times 10^{-16}$. In Fig.~\ref{fig: TGV change entropy}, for a polynomial degree of $4$, we plot the discrete change in entropy for $c_\text{HU}$ and $c_+$ with the weight-adjusted inverse, and $c_+$ without using a weight-adjusted inverse on the curvilinear grid to demonstrate the discrete entropy conservation. 


 \begin{table}[H]
\centering{
 \begin{tabular}{c c c c }
Scheme &  Overintegration & Discrete Entropy Conserved $\mathcal{O}$(1e-13)\\ \hline 
Cons. DG-GL & 0 & No \\ \hline 
NSFR-EC-LGL $c_\text{DG}$ & 0 & Yes \\ \hline 
NSFR-EC-LGL $c_\text{DG}$ & 3 & Yes \\ \hline 
NSFR-EC-GL $c_\text{DG}$ & 0 & Yes \\ \hline 
NSFR-EC-GL $c_\text{DG}$ & 3 & Yes \\ \hline
NSFR-EC-LGL $c_{+}$ & 0 & Yes \\ \hline 
NSFR-EC-LGL $c_{+}$ & 3 & Yes \\ \hline 
NSFR-EC-GL $c_{+}$ & 0 & Yes \\ \hline 
NSFR-EC-GL $c_{+}$ & 3 & Yes \\ \hline 
\end{tabular} 
 }
 \caption{Change in Entropy Results $p=4$,$5$ Cartesian Mesh.} \label{tab: cartesian entropy}
 \end{table} 

 \begin{table}[H]
\centering
{
 \begin{tabular}{c c c c }
Scheme &  Overintegration & Discrete Entropy Conserved $\mathcal{O}$(1e-13)\\ \hline 
Cons. DG-GL & 0 & No \\ \hline 
NSFR-EC-LGL $c_\text{DG}$ & 0 & Yes \\ \hline 
NSFR-EC-LGL $c_\text{DG}$ & 3 & Yes \\ \hline 
NSFR-EC-GL $c_\text{DG}$ & 0 & Yes \\ \hline 
NSFR-EC-GL $c_\text{DG}$ & 3 & Yes \\ \hline 
NSFR-EC-LGL $c_{+}$ & 0 & Yes \\ \hline 
NSFR-EC-LGL $c_{+}$ & 3 & Yes \\ \hline 
NSFR-EC-GL $c_{+}$ & 0 & Yes \\ \hline 
NSFR-EC-GL $c_{+}$ & 3 & Yes \\ \hline 
\end{tabular} 
 }
 \caption{Change in Entropy Results $p=4$,$5$ Curvilinear Mesh.} \label{tab: curvilinear entropy}
 \end{table}

\begin{figure}[H]
    \centering
    \includegraphics[angle=270,width=0.8\textwidth]{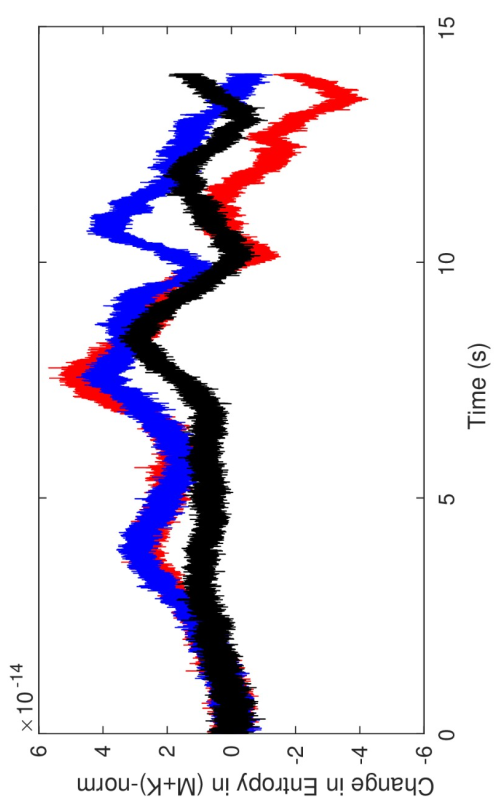}
    \caption{Change in Entropy. Red line $c_\text{HU}$ with weight-adjusted, black line $c_+$ with weight adjusted, and blue line $c_+$ without weight-adjusted mass matrix.}
    \label{fig: TGV change entropy}
\end{figure}

As we can see from Tables~\ref{tab: cartesian entropy} and~\ref{tab: curvilinear entropy}, for both a Cartesian and curvilinear mesh, the weight-adjusted NSFR entropy conserving discretization discretely conserves the change in entropy on the order of 1e-13 for arbitrary quadrature integration---provided the integration strength is exact for at least $2p-1$ polynomials. The imperative of showing varying quadrature rules is that, as shown by Winter~\textit{et al}.~\cite{winters2018comparative}, the integration strength affects the polynomial dealiasing for turbulent flows. Since the application for NSFR will be in predicting turbulent flows, Tables~\ref{tab: cartesian entropy} and~\ref{tab: curvilinear entropy} demonstrate the promising flexibility that the scheme offers.

We additionally plot the change in kinetic energy for the LGL and GL DG cases in Fig.~\ref{fig:KEwithoutPressure} to verify Lemma~\ref{lem: kin energy}. We first verify that the kinetic energy conserving flux discretely satisfies 
$$\sum_{n=1}^{n_\text{state}} \hat{\bm{v}}_\text{K.E.,n}\bm{\chi}\left(\bm{\xi}_v^r\right)^T\left[\left(\bm{W}\nabla^r\bm{\phi}\left(\bm{\xi}_v^r\right)-\nabla^r\bm{\phi}\left(\bm{\xi}_v^r\right)^T\bm{W}\right)\circ \left(\Tilde{\bm{F}}_{m,v}^r-\Tilde{\bm{P}}_{m,v}^r\right)\right]\bm{1}^T =\sum_{k=1}^{d}\psi^k_\text{K.E.}=0 $$ 
on the volume quadrature nodes. 
In Fig.~\ref{fig:KEwithoutPressure}, this is demonstrated to machine precision for both GL and LGL nodes. Here $\Tilde{\bm{F}}_{m,v}^r$ is the two-point flux for only the volume nodes, $$\left(\Tilde{\bm{F}}_{m,v}^r\right)_{ij} = {\bm{f}_s\left(\Tilde{\bm{u}}_m(\bm{\xi}_{i}^r),\Tilde{\bm{u}}_m(\bm{\xi}_{j}^r)\right)}
     {\left(
       \frac{1}{2}\left(\bm{C}_m(\bm{\xi}_{i}^r)+ \bm{C}_m(\bm{\xi}_{j}^r) \right)\right)},\:\forall\: 1\leq i,j\leq N_v,$$ and $\Tilde{\bm{P}}_{m,v}^r$ is the volume pressure work, $$\left(\Tilde{\bm{P}}_{m,v}^r\right)_{ij} = \frac{1}{2}\left(p_i+p_j\right)\bm{I}_d{\left(
       \frac{1}{2}\left(\bm{C}_m(\bm{\xi}_{i}^r)+ \bm{C}_m(\bm{\xi}_{j}^r) \right)\right)},\:\forall\: 1\leq i,j\leq N_v$$ with $\bm{I}_d$ the $d\times d$ identity matrix.

\begin{figure}[H]
    \centering
   \begin{subfigure}[t]{0.5\textwidth}
        \centering
        \includegraphics[angle=270,width=1.0\textwidth]{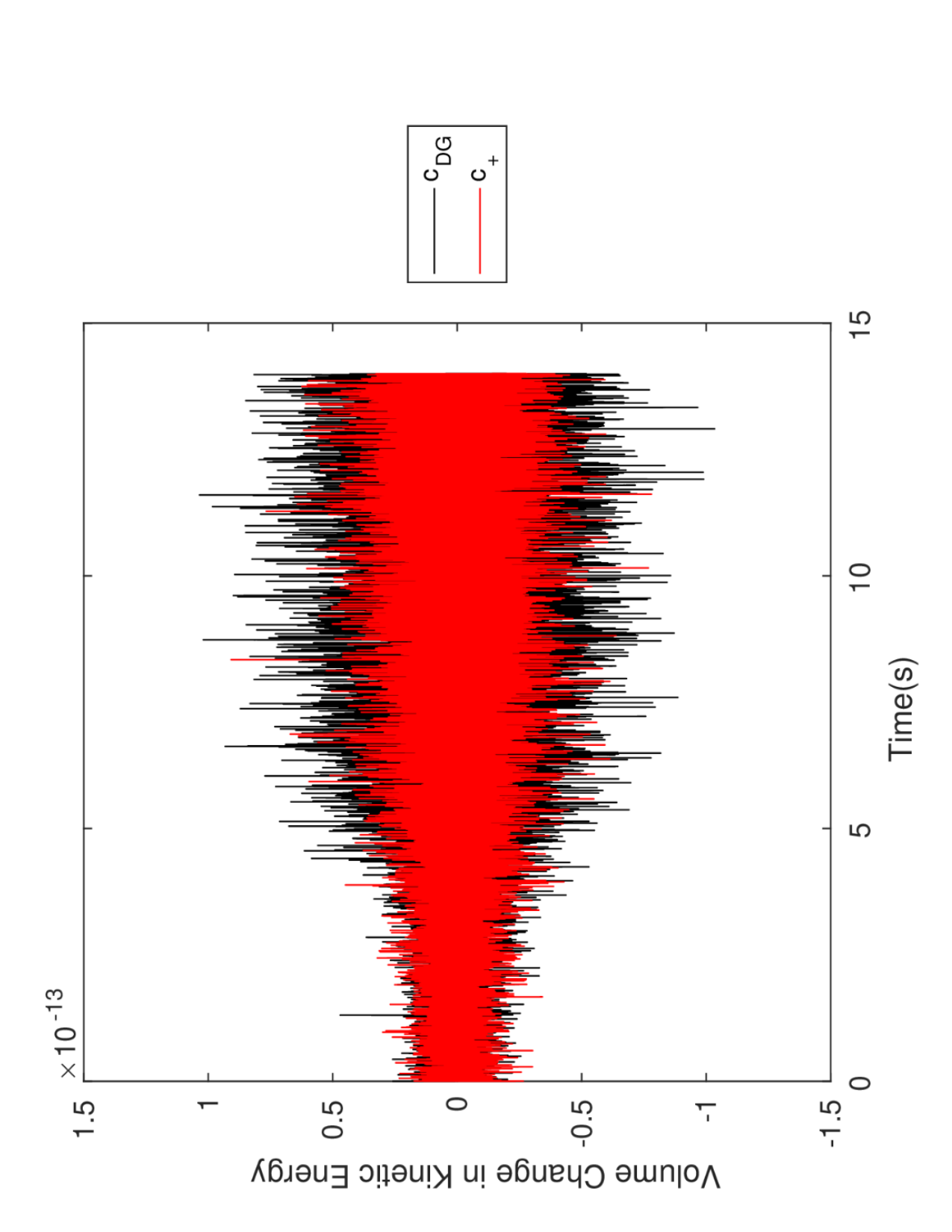}
    \caption{Volume Change in Kinetic Energy without Pressure Work LGL.}\label{subfig: TGV vol change energy LGL}
    \end{subfigure}%
    ~ 
   \begin{subfigure}[t]{0.5\textwidth}
        \centering
       \includegraphics[angle=270,width=1.0\textwidth]{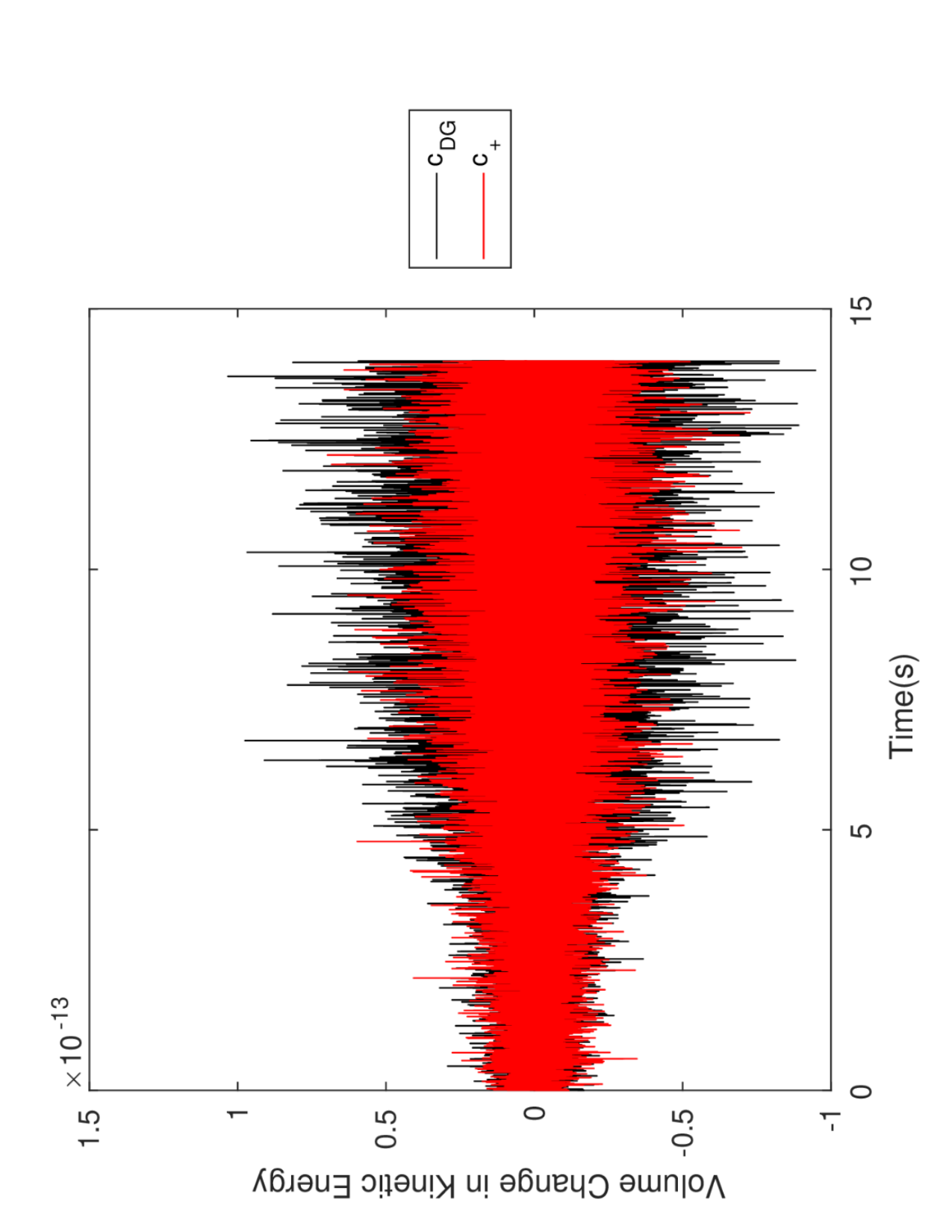}
    \caption{Volume Change in Kinetic Energy without Pressure Work GL.}\label{subfig: TGV vol change energy GL}
    \end{subfigure}
 \caption{Volume Change in Kinetic Energy without Pressure Work for LGL and GL Nodes.}
 \label{fig:KEwithoutPressure}
\end{figure}

\begin{figure}[H]
    \centering
   \begin{subfigure}[t]{0.5\textwidth}
        \centering
        \includegraphics[angle=270,width=1.0\textwidth]{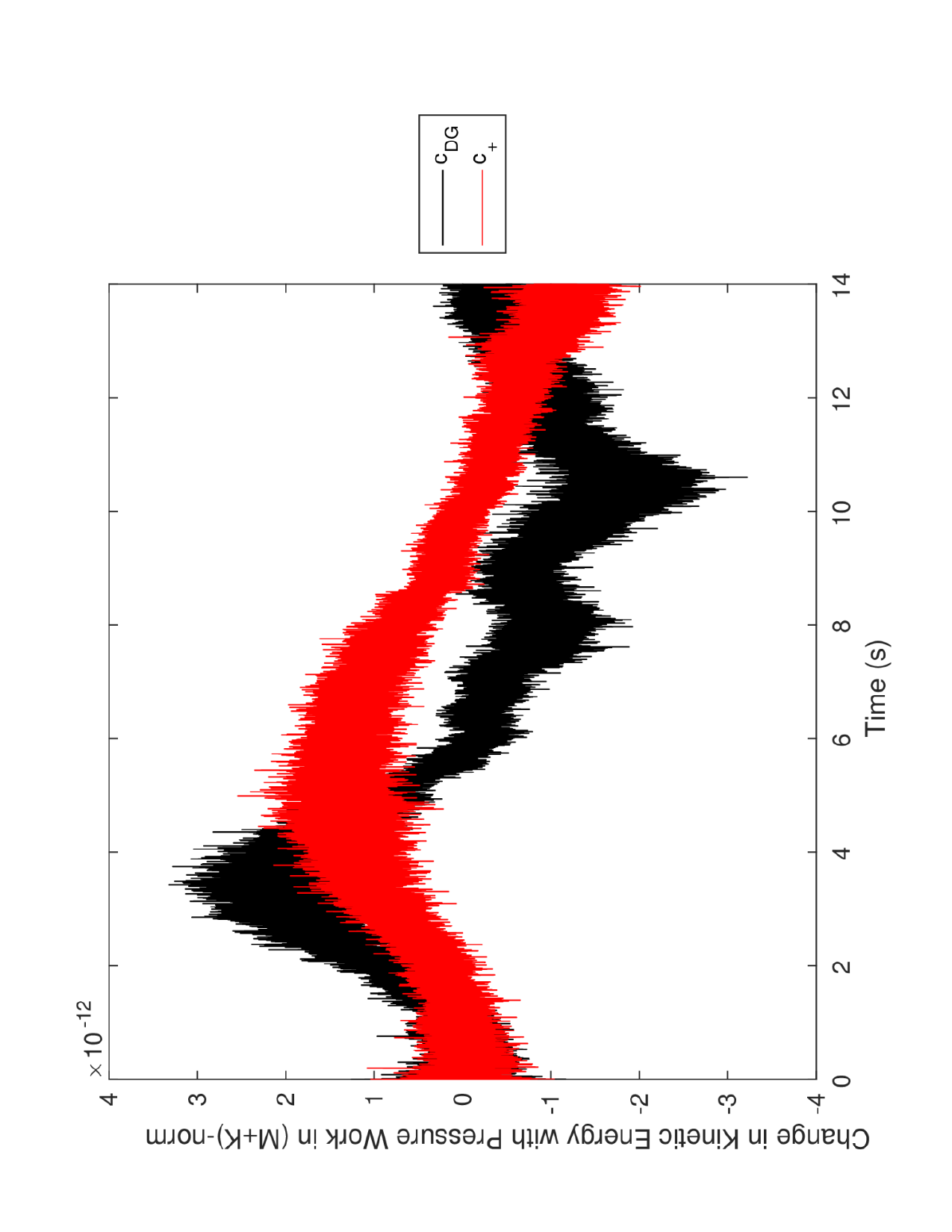}
    \caption{Change in Kinetic Energy without Pressure Work LGL.}\label{subfig: TGV change energy LGL total}
    \end{subfigure}%
    ~ 
   \begin{subfigure}[t]{0.5\textwidth}
        \centering
       \includegraphics[angle=270,width=1.0\textwidth]{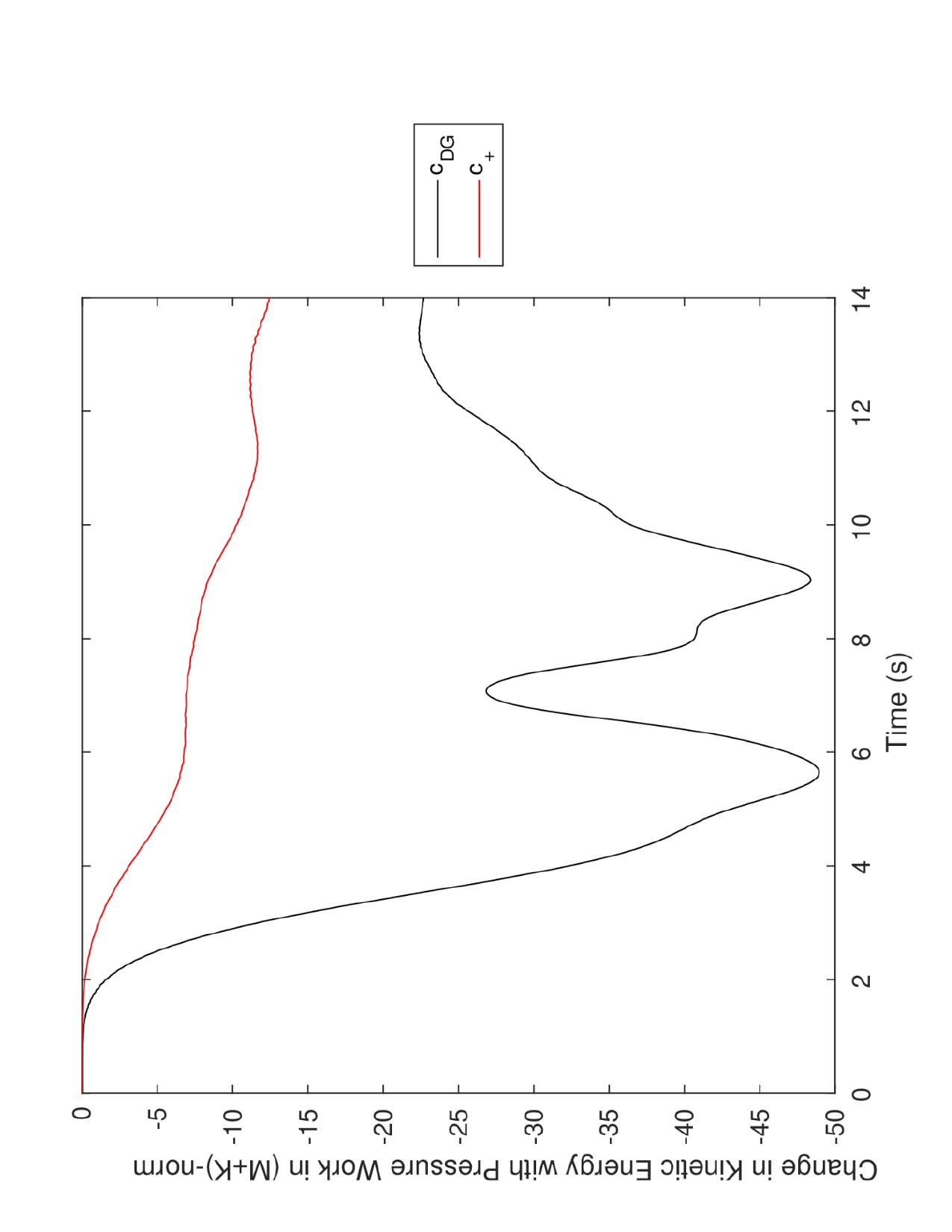}
    \caption{Change in Kinetic Energy without Pressure Work GL.}\label{subfig: TGV change energy GL total}
    \end{subfigure}
 \caption{Change in Kinetic Energy without Pressure Work.}
\end{figure}

Next, we compute the total change in kinetic energy by $\hat{\bm{v}}_\text{K.E.}\left(\bm{M}_m+\bm{K}_m \right)\frac{d}{dt}\hat{\bm{u}}(t)^T-P_\text{work}$, where $P_\text{work}$ is the global pressure work by integrating the volume and surface terms. From Fig.~\ref{subfig: TGV change energy LGL total}, we see for LGL nodes, the total change in kinetic energy is conserved on the order of 1e-12 since it does not require the inverse mapping of kinetic energy variables to the surface, whereas in Fig.~\ref{subfig: TGV change energy GL total}, for GL nodes, it is not conserved. Although a kinetic energy conserving numerical flux was used, from Lemma~\ref{lem: kin energy}, the LGL case discretely conserved kinetic energy to machine precision, whereas the GL case did not. This verifies Lemma~\ref{lem: kin energy} that global kinetic energy cannot be conserved when the surface nodes are not a subset of the volume nodes because the inverse mapping from $\bm{v}_\text{K.E.}\to\bm{u}$ does not exit.

\subsection{NSFR versus DG Conservative with and without Overintegration}\label{sec: comparison}

Next, using our proposed sum-factorized Hadamard product from Cicchino and Nadarajah~\cite{CicchinoHadamardScaling2023}, we wish to compare the performance of the entropy conserving scheme with the conservative DG scheme using sum-factorization techniques. We solve the three-dimensional inviscid TGV problem, with Gauss-Legendre quadrature nodes on the curvilinear grid defined in Eq.~(\ref{eq: warped grid}), with $4^3$ elements and $\beta=\frac{1}{5}$. 
We solve it in six different ways. First, with the conservative DG scheme that does not require a Hadamard product in Eq.~(\ref{eq: cons DG strong}). Second, the conservative DG scheme overintegrated by $2(p+1)$ to resemble exact integration for a cubic polynomial on a curvilinear grid. We consider overintegration because it is another tool used for stabilization~\cite{winters2018comparative} through polynomial dealiazing. Lastly, with our NSFR entropy conserving scheme that requires an uncollocated Hadamard product along with entropy projection techniques. We then, in dashed lines, run the same tests with an FR correction value of $c_{+}$~\cite{vincent_insights_2011} to compare the additional cost of FR versus its DG equivalent. For the test, we perform 10 residual solves sequentially and record the total CPU time. The test was run in parallel with 4 Intel i5-8600 CPUs with 4GB of DDR4 RAM. The CPU time presented is the sum across ranks of the CPU times on each processor.

 \begin{figure}[H]
    \centering
    \includegraphics[angle=270,width=0.7\textwidth]{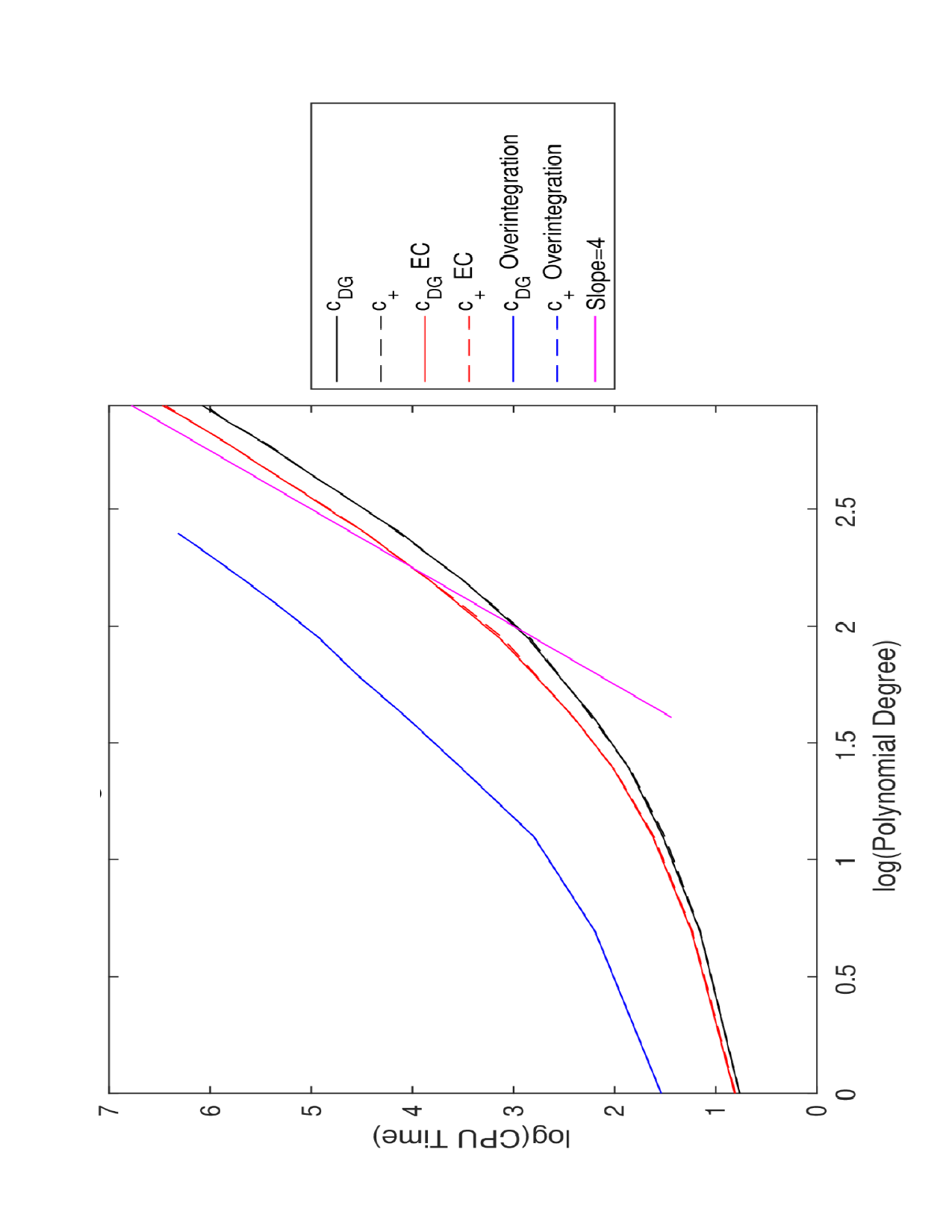}
    \caption{CPU time versus polynomial degree TGV.}\label{fig: Philip scaling}\label{fig: tgv scale}
\end{figure}

From Figure~\ref{fig: tgv scale}, all three methods have the solver scale at order $\mathcal{O}\left(p^{d+1}\right)$ in curvilinear coordinates because they exploit sum-factorization~\cite{orszag1979spectral} for the matrix-vector products, and the NSFR-EC scheme uses our proposed sum-factorized Hadamard product evaluation~\cite{CicchinoHadamardScaling2023}. The blue line representing the overintegrated conservative DG scheme took the most amount of CPU time and was run until $p=20$ due to memory limitations. 
The conservative DG and NSFR-EC schemes took a comparable amount of CPU time in Fig.~\ref{fig: tgv scale}. This result is dependent on the sum-factorized Hadamard product from Cicchino and Nadarajah~\cite{CicchinoHadamardScaling2023}---the sum-factorized Hadamard product is evaluated in the same number of flops as the DG divergence of the flux with sum-factorization for the matrix-vector product. The computational cost difference between the DG conservative scheme and the NSFR-EC $c_{\text{DG}}$ is in the evaluation of the two-point flux as compared to the convective flux at a single quadrature node. 
Also, there was a negligible computational cost difference between all $c_\text{DG}$ versus $c_+$ schemes since the mass matrix inverse was approximated in a weight-adjusted form. From Fig.~\ref{fig: tgv scale}, it appears that using the algorithm in Cicchino and Nadarajah~\cite[Sec. 2]{CicchinoHadamardScaling2023}, the entropy conserving scheme's cost is more comparable to the conservative DG scheme rather than an overintegrated/exactly integrated DG scheme.

To further demonstrate the performance differences between the NSFR-EC-$c_\text{DG}$ scheme using the \enquote{sum-factorized} Hadamard product evaluations detailed in Cicchino and Nadarajah~\cite[Sec. 2]{CicchinoHadamardScaling2023} and the conservative DG scheme in curvilinear coordinates, we run the inviscid TGV on a non-symmetrically warped curvilinear grid and compare the wall clock times. All schemes use an uncollocated, modal Lagrange basis, and are integrated on Gauss-Legendre quadrature nodes. The conservative DG scheme overintegrated by $2(p+1)$ to resemble exact integration for a cubic polynomial on a curvilinear grid. We integrate in time with a 4-$th$ order Runge-Kutta time-stepping scheme with an adaptive Courant-Friedrichs-Lewy value of 0.1 until a final time of $t_f = 14$ s. For NSFR-EC $c_\text{DG}$  we use Chandrashekar's flux~\cite{chandrashekar2013kinetic} in the volume and surface with Ranocha's pressure fix~\cite{ranocha2022preventing}. For the DG conservative scheme, we use the Roe~\cite{roe1981approximate} surface numerical flux. All of the tests were run on 1 node with 8 AMD Rome 7532 processors with 4GB of RAM on each CPU provided by the Digital-Alliance of Canada. Each test was run 4 times and we present the average of the 4-tests' max wall clock time for a single processor and the average of the 4-tests' 
total CPU time across the 8 processors.

\begin{table}[H]\centering
\begin{tabular}{|c|c|c|c|c|}
\hline
$p$ & Number of Elements & Scheme & Max Wall Clock (hours) & Total CPU Time (hours)\\
&&& & \\
\hline
3& $4^3$ &NSFR-EC-$c_\text{DG}$ &0.1111
&0.8713
\\
\cline{3-5}
& &DG-cons  & 0.09416 
&0.7376 
\\
\cline{3-5}
& &DG-cons-overint&0.5033
&3.816
\\
\cline{2-5}
& $8^3$ &NSFR-EC-$c_\text{DG}$ & 1.530
&12.06
\\
\cline{3-5}
& &DG-cons&1.520
&11.64
\\
\cline{3-5}
& &DG-cons-overint&5.740
&44.42
\\
\hline
4& $4^3$&NSFR-EC-$c_\text{DG}$ &0.2756
&2.170
\\
\cline{3-5}
& &DG-cons&0.2669
&2.053
\\
\cline{3-5}
& &DG-cons-overint& 0.9977
&7.767
\\
\cline{2-5}
& $8^3$&NSFR-EC-$c_\text{DG}$ &3.873
&30.55
\\
\cline{3-5}
& &DG-cons& 3.115
& 24.59
\\
\cline{3-5}
& &DG-cons-overint&12.68
& 100.3 
\\
\hline
5&  $4^3$&NSFR-EC-$c_\text{DG}$ &0.5964
&4.470
\\
\cline{3-5}
& &DG-cons& Crashed & Crashed\\
\cline{3-5}
& &DG-cons-overint&2.013
&15.10
\\
\cline{2-5}
&  $8^3$&NSFR-EC-$c_\text{DG}$ &7.052
&52.58
\\
\cline{3-5}
& &DG-cons&Crashed & Crashed \\
\cline{3-5}
& &DG-cons-overint& 23.41
&185.2
\\
\hline
\end{tabular}\caption{TGV Comparison for CPU and Wall Clock Times.}\label{tab: wall clock table}
\end{table}

From Table~\ref{tab: wall clock table}, averaging all of the tests, the NSFR-EC-$c_\text{DG}$ scheme took about a 13\% longer CPU time and 12\% longer wall clock time as compared to conservative DG. On average, the overintegrated conservative DG scheme took 364\% longer CPU time and 365\% longer wall clock time than the NSFR-EC-$c_\text{DG}$ scheme. 
This small percentage difference between DG conservative and NSFR-EC-$c_\text{DG}$ demonstrates how the sum-factorized Hadamard product algorithm in Cicchino and Nadarajah~\cite{CicchinoHadamardScaling2023} has drastically reduced the computational cost of computing a two-point flux. 
The $p=5$ DG conservative scheme diverged at $t=9.06$s on the $4^3$ mesh, and at $t=6.70$s on the $8^3$ mesh. This further demonstrates the advantage of the NSFR-EC scheme since it has provable guaranteed nonlinear stability for a reasonable computational cost trade-off. 
From both Fig.~\ref{fig: Philip scaling} and Table~\ref{tab: wall clock table}, it is clear that with the proposed sum-factorized Hadamard product, entropy conserving and stable methods are computationally competitive with classical DG schemes.

\section{Conclusion}

We demonstrated a novel, low-storage, weight-adjusted approach for NSFR schemes in curvilinear coordinates. 
{\color{black}In the context of an uncollocated case, both theoretical proof and numerical validation have demonstrated the attainability of discrete entropy conservation. Conversely, the preservation of discrete kinetic energy is achievable only for the collocated case on Gauss-Legendre-Lobatto quadrature nodes.} 

Additionally, in curvilinear coordinates, a unique, dense Gram matrix needs to be inverted within every element. This issue was circumvented by a weight-adjusted approach, where the operation cost was reduced to inverting a unique diagonal matrix storing the determinant of the metric Jacobian at each quadrature node. Coupled with sum-factorization, and the sum-factorized Hadamard product, our proposal greatly accelerates the run-time. For different FR schemes, the weight-adjusted framework maintained nonlinear stability for the inviscid Taylor-Green vortex problem on an extremely coarse, heavily warped curvilinear grid, making it an attractive implementation for FR in the high-performance computing context.

\section{Acknowledgements}

We would like to gratefully acknowledge the financial support of the Natural Sciences and Engineering Research Council of Canada (NSERC) Discovery Grant Program, NSERC Postgraduate Scholarships -- Doctoral program, and McGill University.

\bibliographystyle{model1-num-names}
\bibliography{bibliographie}

\end{document}